\newenvironment{conjecture*}[1][]{\textbf{Conjecture #1\hspace{.3em}}}{}
\newenvironment{theorem*}[1]{\textbf{#1}\itshape \hspace{.3em}}{\upshape}
\newenvironment{example*}[1]{\textbf{#1}\itshape \hspace{.3em}}{\upshape}
\newenvironment{observation*}[1][]{\textbf{Observation #1}\itshape \hspace{.3em}}{\upshape} 
\newenvironment{definition*}[1][]{\textbf{Definition #1}\itshape \hspace{.3em}}{\upshape} 
\newenvironment{corollary*}[1][]{\textbf{Corollary #1\hspace{.3em}}}{} 
\newenvironment{lemma*}[1][]{\textbf{Lemma #1\hspace{.3em}}}{} 
\newenvironment{remark*}[1][]{\textbf{Remark #1\hspace{.3em}}}{} 
\newenvironment{proof}[1][]{\textbf{Proof #1\hspace{.3em}}}{}
\newenvironment{proofs}{\textbf{Proofs\hspace{.3em}}}{}
\newtheorem{definition}{Definition}[section]
\newtheorem{theorem}{Theorem}[section] 
\newcounter{kpremark}
\newcommand{\ord}{\ensuremath{\mathrm{ord}}}
\newcommand{\mod}[1]{\ensuremath{\hspace{.5em}(#1)}}
\newcommand{\newo}{\ensuremath{\mathcal{O}}}
\newcommand{\newk}{\ensuremath{\mathcal{K}}}
\newcommand{\newc}{\ensuremath{\mathbb{C}}}
\newcommand{\newq}{\ensuremath{\mathbb{Q}}}
\newcommand{\newz}{\ensuremath{\mathbb{Z}}}
\newcommand{\nequiv}{\ensuremath{\equiv\hspace{-1em}/\ \,}}
\begin{document}

\addtolength{\parskip}{-2pt}
\begin{frontmatter}






\title{The sum of two cubes problem---an approach that's classroom friendly \\[2ex] \small (For Al and Micky Cuoco)}
\author{Paul Monsky}

\address{Brandeis University, Waltham MA  02454-9110, USA. monsky@brandeis.edu}

\begin{abstract}
In this note I give simple proofs of classical results of Euler, Legendre and Sylvester showing that for certain integers $M$ there are no (or only a few) solutions of $x^{3} + y^{3} = M$, with $x$ and $y$ in \newq. The proofs all use a single argument—infinite 3-descent in the ring $\newo = \newz[\omega]$ of Eisenstein integers. (Everything needed about \newo\ is developed from scratch.) The reader only needs the briefest acquaintance with complex numbers, fields and congruence modulo an element of a commutative ring. In particular I never say anything about ideals or elliptic curves (though I do mention cubic reciprocity in passing), and a clever high-school student might well enjoy the note. A few new results with $M$ in \newo\ and $x$ and $y$ in $\newq[\omega]$ are also derived.
\end{abstract}


\end{frontmatter}

\renewcommand{\thesection}{\arabic{section}}
\setcounter{section}{-1} 
\section{Introduction}
\label{section0}

The problem of whether it's possible to write a particular integer $M$ as a sum of two rational cubes has a long history. For an excellent account of it, covering events from the time of Euclid and Diophantus through 2019, see \cite{1}.  (There have been developments since that I'll mention.) 

Even when $M$ is prime, questions remain, though it now appears that they've been settled when $M\nequiv 1 \bmod 9$. (When $M\equiv 1 \mod{9}$ the results quoted in \cite{1} depend on a famous unproved conjecture.)

Diophantus knew that the equation $x^{3} + y^{3} = 7$ had a solution with $x$ and $y$ rational. For us there's the evident solution (2,-1), but neither Euclid nor Diophantus imagined such monstrosities as negative numbers or zero. Diophantus, perhaps influenced by Euclid, claimed that from two cubes whose difference is a given $M$, two cubes whose sum is $M$ can be produced, and in particular one can pass from $(2)^{3}- (1)^{3} = 7$  to $(\frac{4}{3})^{3}+(\frac{5}{3})^{3}=7$. For an imagined reconstruction of the argument in the language of the day see \cite{1}.

Whether Diophantus had any interest in producing more solutions is problematic, but the ``tangent line'' and ``secant line'' methods of Fermat had their roots in these earlier constructions, and can be used to show that unless $M$ is a cube or twice a cube, $x^{3} + y^{3} = M$ has no rational solutions or an infinitude of such solutions. (We'll see later what happens in the exceptional cases, i.e when $M$ is 1 or 2.)


In a famous annotation in his copy of Diophantus' ``Arithmetica'', Fermat claimed to have found a truly marvelous proof that for $n>2$ the equation $x^{n}+y^{n}=1$ has no solutions in \newq\ except for the evident ones with $xy=0$. This ``FLT($n$)'' was finally demonstrated in recent years but (using a technique of ``infinite descent'' developed by Fermat), Euler and Kummer long ago found proofs of FLT(3). Kummer's proof (and Euler's implicitly) involved working in the ring, \newo,  of Eisenstein integers, which contains all three of the complex cube roots of 1. Kummer in fact showed that the only solutions in the field of fractions of the ring had $x=0$, (so $y^{3}=1$) or $y=0$, (so $x^{3}=1$).

When $M=2$, Euler proved that the only solution of $x^{3}+y^{3}=M$ with $x$ and $y$ in \newq\ is $(1,1)$. Legendre showed there were no solutions when $M$ is 3, 4 or 5 and said this also held when $M$ is 6. (But here he was mistaken. $(\frac{37}{21},\frac{17}{21})$ is a solution, and as we've indicated, we can conclude that there are infinitely many.) The ideas of Euler and Legendre were taken up by Pépin, Sylvester and Lucas in the late 1800's and early 1900's. They explored (among many others) the cases where $M$ is $p$ or $p^{2}$, with $p$ prime, and showed there were no solutions when $p\equiv 2\ \mbox{or } 5 \bmod{9}$, except for $(1,1)$ when $M=2$.

In what follows I'm concerned with establishing all these non-existence results, and some generalizations with \newz\ replaced by \newo\ , and ``prime'' by ``irreducible in \newo\,''. All this originated when I found a nice proof by 3-descent of Euler's result on $x^{3}+y^{3}=2$, and used it on MathOverflow to find the solutions of $y^{2}=x^{3}+1$ in \newq.

At the same time I found a closely related proof of Euler's $x^{3}+y^{3}=1$ theorem. Earlier this year I realized that these very simple proofs generalized to give many of Sylvester's theorems, and that further attractive results emerged when one passed from \newz\ to \newo. All that's really needed is standard facts about \newo, which I'll develop from scratch in the following section. I've presented the arguments when $M$ is 1 or 2 in undergraduate number theory courses---they're accessible to students with an interest in number theory and algebra. I've avoided the language of ideals and elliptic curves, but assume the reader knows a little about rings, fields, and congruence mod an element.

\renewcommand{\thesection}{\Alph{section}}
\section{Simple properties of \newo\ and \newk  }
\label{sectionA}

\begin{definition}
\label{defA.1}
$u$ and $v$ in \newc\ are the roots of $z^{2}+z+1=0$. \newo\ and \newk\ are the \newq\ and \newz-linear combinations respectively of $1$, $u$ and $v$. There is a multiplicative function from \newc\ to the non-negative reals taking $w$ to $w\bar{w}=|w|^{2}$. The restriction of this function to \newk\ is the ``norm map'', $w\rightarrow N(w)$.
\end{definition}

Note that $uv=1$. Also $u+v=-1$, so any two of $1$, $u$ and $v$ furnish a \newz-basis of \newo\ and a \newq-basis of \newk.  Since $\frac{z^{3}-1}{z-1}=z^{2}+z+1$, $u^{3}$ and $v^{3}$ are both 1. It follows that $u^{2}=v$ and $v^{2}=u$; we conclude that \newo\ and \newk\ are closed under multiplication. Since complex conjugation interchanges $u$ and $v$, it stabilizes \newo\ and \newk. The norm of $a+bu$ in \newk\ is $(a+bu)(a+bv)=a^{2}-ab+b^{2}$, a non-negative element of \newq. Similarly, we see that $N$ maps the set of non-zero elements of \newo\ into the set of positive integers. The reciprocal of a non-zero element $w$ of \newk\ is $\frac{\bar{w}}{N(w)}$, which lies in \newk, and we see that \newk\ is a field. \newo\ is called ``the ring of Eisenstein integers''.

\begin{theorem}
\label{theoremA.1}\hspace{2em}
The units in \newo\ are $1$, $-1$, $u$, $-u$, $v$ and $-v$.
\end{theorem}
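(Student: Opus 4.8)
The plan is to first reduce the problem to finding all elements of \newo\ of norm $1$, and then to solve that small Diophantine equation by hand.

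For the reduction, suppose $w$ in \newo\ satisfies $ww' = 1$ with $w'$ also in \newo. Applying the multiplicative norm gives $N(w)N(w') = N(1) = 1$, and since $N$ sends nonzero elements of \newo\ to positive integers (established above), we must have $N(w) = 1$. Conversely, if $w$ is a nonzero element of \newo\ with $N(w) = 1$, then $w\bar{w} = |w|^2 = 1$; because complex conjugation stabilizes \newo\ (also noted above), $\bar{w}$ lies in \newo, so $w$ is a unit. Hence the units of \newo\ are exactly its norm-$1$ elements.

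For the computation I would write an arbitrary element of \newo\ as $a + bu$ with $a$ and $b$ integers, so the condition becomes $a^2 - ab + b^2 = 1$ by the norm formula derived above. The quadratic form $a^2 - ab + b^2$ is positive definite: completing the square yields $a^2 - ab + b^2 = (a - \frac{b}{2})^2 + \frac{3}{4}b^2$ and, by symmetry, also $= (b - \frac{a}{2})^2 + \frac{3}{4}a^2$. So $N(a+bu) = 1$ forces $\frac{3}{4}a^2 \le 1$ and $\frac{3}{4}b^2 \le 1$, hence $a, b \in \{-1, 0, 1\}$. Running through the nine pairs: $(0,0)$ gives $0$; $(1,-1)$ and $(-1,1)$ give $3$; and the six remaining pairs $(\pm 1, 0)$, $(0, \pm 1)$, $(1,1)$, $(-1,-1)$ all give $1$. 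Reading these back as elements of \newo\ and using $1 + u + v = 0$ (so $1 + u = -v$ and $-1 - u = v$), the six solutions are exactly $1$, $-1$, $u$, $-u$, $v$, $-v$, which is the claimed list.

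I do not expect a genuine obstacle here: once the norm-$1$ reduction is in place, everything is an elementary finite check. The one point requiring care is that this reduction leans on two facts proved in the preceding discussion — that nonzero elements of \newo\ have positive \emph{integer} norm, and that \newo\ is closed under conjugation — so both should be in hand before the argument begins.
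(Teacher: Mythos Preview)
Your proof is correct and follows essentially the same route as the paper: reduce to $N(w)=1$ via multiplicativity, then exploit positive definiteness of $a^2-ab+b^2$ by completing the square to bound $a$ and $b$ and finish with a finite check. The paper clears denominators and writes $4N=(2a-b)^2+3b^2$ to bound $b$ alone, then solves for $a$, whereas you use the symmetric pair of completions to bound both $a$ and $b$ and then run through nine cases; this is a cosmetic difference only.
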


\begin{proof}
Using the multiplicativity of $N$, we see that $a+bu$ is a unit if and only if it has norm 1. Now $4N(a+bu)=4(a^{2}-ab+b^{2})=(2a-b)^{2}+3b^{2}$. So if $a+bu$ is a unit, $b$ is 0, 1, or $-1$. For each such $b$ there are only two possible choices for $a$, making for 6 units in all.
\qed
\end{proof}

\begin{lemma*}
\label{lemmaA.1}
Every $w$ in \newc\ has distance $\le \frac{1}{\sqrt{3}}$ from some $q$ in \newo.
\end{lemma*}

\begin{proof}
Write $w$ as $a+ib\sqrt{3}$ with $a$ and $b$ real. Let $\beta=u-v$. Then $\beta^{2}=u+v-2uv=-3$. So $\beta$ is $i\sqrt{3}$ or  $-i\sqrt{3}$. We are free to modify $w$ by any \newz-linear combination of 1 and $i\sqrt{3}$ and so may assume that $|a| $ and $|b|$ are $\le \frac{1}{2}$. Then $w\bar{w}=a^{2}+3b^{2}$ is $\le 1$, and $w$ lies in the closed unit disc with center 0. Let $H$ be the hexagonal region whose vertices are the units of \newo. If $w$ lies outside of $H$ it evidently has distance $<\frac{2\pi}{12}$ from some unit, but since $\pi<2\sqrt{3}$, this distance is $<\frac{1}{\sqrt{3}}$. If on the other hand, $w$ is in $H$, then it lies in one of the 6 triangular regions whose vertices are 0 and two ``adjacent'' vertices of the hexagonal region. The boundary of each triangular region is an equilateral triangle of edge length 1, and the centroid of such a triangle has distance $\frac{1}{\sqrt{3}}$ from each vertex. It follows easily that $w$ has distance $\le \frac{1}{\sqrt{3}}$ from some vertex of the triangular region, but these vertices lie in \newo. (Alternatively one could draw a tessellation of the plane by equilateral triangles of edge length 1 having vertices in \newo.)
\qed
\end{proof}

\begin{theorem}
\label{theoremA.2}\hspace{2em}
The norm map from \newo\ to the non-negative integers is ``Euclidean''. More precisely, if $l$ and $m$ are in \newo\ with $m\ne 0$, then there are $q$ and $r$ in \newo\ with $l=qm+r$ and $N(r)\le \frac{1}{3}N(m)$.
\end{theorem}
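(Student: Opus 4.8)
The plan is to reduce everything to the Lemma by dividing. Given $l,m\in\newo$ with $m\ne 0$, form the complex number $w=l/m$ (this makes sense since $m\ne 0$ in $\newc$; if one prefers to stay inside \newk, recall we showed \newk\ is a field, so $w\in\newk$). The Lemma then hands us a $q\in\newo$ with $|w-q|\le\frac{1}{\sqrt{3}}$, and this $q$ will be the desired quotient.

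Next I would set $r=l-qm$. Since $l$, $q$, and $m$ all lie in \newo, and \newo\ is closed under multiplication and subtraction (established just after Definition~\ref{defA.1}), we get $r\in\newo$, and by construction $l=qm+r$. So it only remains to check the size estimate on $r$.

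For that, write $r=m(w-q)$ and use multiplicativity of the modulus on \newc\ to get $|r|^{2}=|m|^{2}\,|w-q|^{2}\le|m|^{2}\cdot\frac{1}{3}$. Since $r$ and $m$ lie in \newk, Definition~\ref{defA.1} tells us $N(r)=|r|^{2}$ and $N(m)=|m|^{2}$, whence $N(r)\le\frac{1}{3}N(m)$, as claimed.

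I don't expect a genuine obstacle here: all the geometric content—the fact that every point of \newc\ is within $\frac{1}{\sqrt{3}}$ of a lattice point—has already been extracted in the Lemma, and the constant $\frac{1}{3}$ in the theorem is exactly the square of the constant $\frac{1}{\sqrt{3}}$ there. What is left is bookkeeping: noting $r\in\newo$, and matching the norm $N$ with $|\cdot|^{2}$ via the definition together with the multiplicativity of the complex modulus. The only place to be slightly careful is that the Lemma is stated for arbitrary $w\in\newc$, which is precisely what we need since $l/m$ need not be an Eisenstein integer.
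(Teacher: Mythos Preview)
Your proof is correct and follows essentially the same route as the paper: apply the Lemma to $l/m$ to obtain $q\in\newo$, set $r=l-qm$, and use multiplicativity to get $N(r)=N(m)\,N(l/m-q)\le\frac{1}{3}N(m)$. The only cosmetic difference is that the paper invokes multiplicativity of $N$ directly while you pass through $|\cdot|^{2}$; since $N(w)=|w|^{2}$ by definition, the arguments are the same.
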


The proof is easy---by the preceding Lemma, $\frac{l}{m}$ has distance $\le \frac{1}{\sqrt{3}}$ from some $q$ in \newo. Let $r$ be $l-qm$, and conclude that $N(r)=N(m)N(\frac{l}{m}-q)\le\frac{1}{3}N(m)$.

\begin{corollary*}
\label{corollaryA.2}\hspace{2em}
Let $l$ and $m$ be elements of \newo\ having no common irreducible factor. Then 1 is an \newo-linear combination of $l$ and $m$.
\end{corollary*}

The argument is by induction on the minimum of $N(l)$ and $N(m)$. If one of $l$ or $m$ is 0, the other has no irreducible factor, must be a unit, and it's clear. If neither is 0 we may assume $0<N(m)\le N(l)$, and we write $l$ as $qm+r$ as in Theorem \ref{theoremA.2}. If an irreducible divides $m$ and $r$, it divides $l$ and $r$, so no such irreducibles exist. Now $N(r)<N(m)$ and the induction assumption tells us that $A(l-qm)+B\cdot m=1$ for some $A$ and $B$ in \newo. So 1 is an \newo-linear combination of $l$ and $m$.

From the corollary one derives all the standard unique factorization facts about \newo. For example, suppose that $m$ in \newo\ is irreducible, and that $l$ in \newo\ represents a non-zero element of $\newo/m$. Then the corollary shows that $1=Al+Bm$ for some $A$ and $B$ in \newo. So $A$ provides a multiplicative inverse to $l$ in  $\newo/m$, and  $\newo/m$ is a field. Since the product of non-zero elements in a field is non-zero we find:

\begin{theorem}
\label{theoremA.3}\hspace{2em}
If an irreducible element, $m$, of \newo\ divides a product, it divides one of the factors.
\end{theorem}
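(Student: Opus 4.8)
The plan is to deduce this directly from the field structure of $\newo/m$, since essentially all the work has already been done in the Corollary and in the Euclidean property of Theorem \ref{theoremA.2}. First I would reduce to a product of exactly two factors: if $m$ divides $a_{1}\cdots a_{n}$ and the two-factor case is known, an easy induction on $n$ (grouping $a_{1}\cdots a_{n}$ as $a_{1}\cdot(a_{2}\cdots a_{n})$) finishes it, the case $n=1$ being trivial. So the heart of the matter is: if $m$ is irreducible and $m\mid ab$ with $a,b\in\newo$, then $m\mid a$ or $m\mid b$.

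To prove that, I would pass to the quotient ring $\newo/m$ and write $\bar{a},\bar{b}$ for the images of $a,b$, so that $\bar{a}\bar{b}=\overline{ab}=0$. Since $m$ is not a unit, $\newo/m$ is not the zero ring, and — as was observed just above using the Corollary — every non-zero element of $\newo/m$ is invertible, so $\newo/m$ is a field. If $\bar{a}\ne 0$, multiplying $\bar{a}\bar{b}=0$ by $\bar{a}^{-1}$ yields $\bar{b}=0$; hence $\bar{a}=0$ or $\bar{b}=0$, that is, $m\mid a$ or $m\mid b$. This is exactly the translation of ``a field has no zero divisors'' into divisibility language.

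There is no genuinely hard step here, but the one point I would state carefully — it is used both in the argument above and in the preceding remark that $\newo/m$ is a field — is that any $l\in\newo$ not divisible by $m$ is prime to $m$ in the sense of the Corollary, i.e.\ shares no common irreducible factor with $m$. The reason is that an irreducible dividing the irreducible $m$ must be an associate of $m$ (it differs from $m$ by one of the six units listed in Theorem \ref{theoremA.1}), and no associate of $m$ divides $l$. Granting this, the Corollary produces $1=Al+Bm$, which is what makes $\bar{l}$ invertible in $\newo/m$ and drives the whole proof.
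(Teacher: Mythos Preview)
Your proof is correct and follows essentially the same route as the paper: both argue that $\newo/m$ is a field (via the Corollary to Theorem~\ref{theoremA.2}) and then use that a field has no zero divisors to conclude $m\mid a$ or $m\mid b$. Your extra care in reducing to two factors and in explaining why ``$m\nmid l$'' implies ``$l$ and $m$ have no common irreducible factor'' simply spells out details the paper leaves implicit.
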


We also see that \newo\ has ``unique factorization into irreducibles'' in the same sense that \newz\ does. I'll make this more precise.  We say that two elements of \newo\ are ``associates'' if each is the product of the other by a unit. From each sextuple of associated irreducibles we choose one, which we deem ``distinguished''. (In the case of \newz, the distinguished irreducible is generally taken to be the positive one.) Using the norm we see easily that every non-zero element of \newo\ is the product of a unit by a (possibly empty) product of powers of distinct distinguished irreducibles.

\begin{theorem}
\label{theoremA.4}\hspace{2em}
Once the distinguished irreducibles have been chosen, the above decomposition is unique.
\end{theorem}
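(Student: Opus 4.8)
The plan is to argue by induction on the norm $N(w)$ of the non-zero element $w\in\newo$ being factored, with Theorem \ref{theoremA.3} as the essential input. Suppose $w$ has two decompositions of the advertised form,
\[
w \;=\; \varepsilon\,\pi_1^{a_1}\cdots\pi_k^{a_k} \;=\; \varepsilon'\,\rho_1^{b_1}\cdots\rho_\ell^{b_\ell},
\]
where $\varepsilon,\varepsilon'$ are units (hence among the six listed in Theorem \ref{theoremA.1}), the $\pi_i$ are distinct distinguished irreducibles with $a_i\ge 1$, and likewise for the $\rho_j$ with $b_j\ge 1$. Every distinguished irreducible has norm $\ge 2$, since the elements of norm $1$ are exactly the units; so if $N(w)=1$ — i.e. $w$ is a unit — both products are empty and $\varepsilon=w=\varepsilon'$, which settles the base case.

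For the inductive step assume $N(w)>1$, so at least one $\pi_i$, say $\pi_1$, occurs. Then $\pi_1$ divides the right-hand side $\varepsilon'\rho_1^{b_1}\cdots\rho_\ell^{b_\ell}$. An irreducible cannot divide a unit (else it would itself be a unit, contradicting irreducibility), so by applying Theorem \ref{theoremA.3} repeatedly we conclude that $\pi_1$ divides some $\rho_j$. Since $\rho_j$ is irreducible and $\pi_1$ is not a unit, $\pi_1$ and $\rho_j$ are associates; both being distinguished, they are equal. Now \newo\ is a subring of \newc\ and hence an integral domain, so we may cancel the common factor $\pi_1=\rho_j$ from both sides, obtaining two decompositions of $w/\pi_1$, an element of strictly smaller norm since $N(w/\pi_1)=N(w)/N(\pi_1)<N(w)$. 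The induction hypothesis forces these to agree — matching units and matching data of (distinguished irreducible, multiplicity) — and reinstating the factor $\pi_1$ shows the two original decompositions of $w$ coincide.

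The argument is essentially bookkeeping, and I do not anticipate a genuine obstacle; the one point deserving care is the step ``$\pi_1\mid\varepsilon'\rho_1^{b_1}\cdots\rho_\ell^{b_\ell}$ implies $\pi_1\mid\rho_j$ for some $j$'', which is just the iterated form of Theorem \ref{theoremA.3}. The other thing to keep straight is that cancelling $\pi_1$ lowers the exponents $a_1$ and $b_j$ by one, possibly to zero, in which case that irreducible drops out of the list; for this reason the induction is cleanest if ``uniqueness of the decomposition'' is read as: the unit is determined, and the finite multiset of irreducible factors counted with multiplicity is determined.
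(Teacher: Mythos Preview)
Your proof is correct and follows essentially the same route as the paper's: induction on $N(w)$, with Theorem \ref{theoremA.3} used to show that an irreducible appearing in one factorization must appear in the other, then cancelling and applying the induction hypothesis to $w/\pi_1$. The paper's version is terser, but the logical skeleton---and in particular the key use of Theorem \ref{theoremA.3} and the observation that two associated distinguished irreducibles are equal---is identical to yours.
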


The proof is easy. Suppose one has two distinct such factorizations of $w$. We argue by induction on $N(w)$. If $N(w)=1$, the product of powers in each decomposition is an empty product and the result is clear. If $N(w)>1$, $w$ is divisible by some distinguished irreducible, $m$. By Theorem \ref{theoremA.3}, $m$ appears to positive exponent in both factorizations of $w$ and we apply the induction hypothesis to the two resulting factorizations of $w/m$.

\begin{theorem}
\label{theoremA.5}\hspace{2em}
Suppose $m$ in \newo\ is irreducible. Then:
\begin{enumerate}
\item \label{theoremA.5.a} $m$ divides some prime $p$ of \newz.
\item \label{theoremA.5.b} If $p\equiv 2 \mod{3}$, $m$ is an associate of $p$ and $\newo/m$ has $p^{2}$ elements.
\item \label{theoremA.5.c} If $p\equiv 1 \mod{3}$, $m\bar{m}=p$, $m$ and $\bar{m}$ are not associates, and $\newo/m$ and $\newo/\bar{m}$ each have $p$ elements.
\item \label{theoremA.5.d} If $p=3$, $m$ is an associate of $\beta=u-v$, and $3$ factors as $(-1)\cdot\beta^{2}$.
\end{enumerate}
\end{theorem}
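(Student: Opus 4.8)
The strategy is to get everything from the unique-factorization facts just established (Theorems~\ref{theoremA.1} and~\ref{theoremA.3}), the multiplicativity of $N$, and the single computation $\beta^{2}=-3$. I would first prove (a). Since $m$ is not a unit, $N(m)=m\bar m$ is an integer $>1$ (by the proof of Theorem~\ref{theoremA.1} together with the positivity of $N$ on non-zero elements); writing $N(m)$ as a product of rational primes and using that the irreducible $m$ divides $m\bar m$, Theorem~\ref{theoremA.3} yields a rational prime $p$ with $m\mid p$. Such a $p$ is unique, for if $m$ divided two distinct rational primes $p,q$ then $1=ap+bq$ with $a,b\in\newz$ would make $m$ a unit. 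Taking norms in $p=m\cdot(p/m)$ gives $N(m)\mid p^{2}$, so $N(m)$ is $p$ or $p^{2}$; and $N(m)=p^{2}$ holds exactly when $p/m$ has norm $1$, i.e.\ when $m$ is an associate of $p$ (equivalently, $p$ stays irreducible in \newo). So the theorem reduces to deciding, for each $p$, whether $p$ stays irreducible in \newo, and then counting $|\newo/m|$.

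The two easy cases: if $p=3$, then $\beta^{2}=(u-v)^{2}=u^{2}+v^{2}-2uv=(u+v)-2=-3$, so $3=(-1)\beta^{2}$ and $N(\beta)=\beta\bar\beta=-\beta^{2}=3$ is prime, forcing $\beta$ irreducible; since $m\mid 3=-\beta^{2}$, Theorem~\ref{theoremA.3} gives $m\mid\beta$, so $m$ is an associate of $\beta$---this is (d). If $p\equiv 2 \mod{3}$, the identity $4N(a+bu)=(2a-b)^{2}+3b^{2}$ shows no element of \newo\ has norm $p$ (that would make $p$ a square mod $3$), so $N(m)=p^{2}$, $m$ is an associate of $p$, and $\newo/m\cong\newo/p\newo$ has $p^{2}$ elements since $1,u$ form a \newz-basis of \newo---this is (b).

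The substantive case is $p\equiv 1 \mod{3}$. First I would show $p$ splits: as $3\mid p-1$, the group $(\newz/p)^{\times}$ has an element $c$ of order $3$, so $p$ divides $c^{2}+c+1=(c-u)(c-v)$ but divides neither factor (the coefficient of $u$ in $(c-u)/p$, resp.\ of $v$ in $(c-v)/p$, is $-1/p\notin\newz$); hence $p=\gamma_{1}\gamma_{2}$ with both $\gamma_{i}$ non-units, which forces $N(\gamma_{1})=N(\gamma_{2})=p$ and each $\gamma_{i}$ irreducible. Our $m$ divides $\gamma_{1}\gamma_{2}$, so it is an associate of some $\gamma_{i}$ and $m\bar m=N(m)=p$. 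Next, $m$ and $\bar m$ are not associates: otherwise (writing $m=a+bu$) $m$ would divide $\bar m-m=b(v-u)=-b\beta$, and since $m\nmid\beta$ (else $p=N(m)\mid N(\beta)=3$) irreducibility of $m$ would give $m\mid b$, hence $m\mid a$ too, so $p=N(m)$ divides $\gcd(a,b)^{2}$, forcing $p^{2}\mid a^{2}-ab+b^{2}=p$---absurd. Finally, $\newo/m$ is a field (shown just before Theorem~\ref{theoremA.3}) and is a quotient of the $p^{2}$-element ring $\newo/p\newo$ (since $p=m\bar m\in m\newo$), so $|\newo/m|$ is $p$ or $p^{2}$; the value $p^{2}$ would give $\newo/m\cong\newo/p\newo$, hence $m$ an associate of $p$, contradicting $N(m)=p$. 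Thus $\newo/m$, and symmetrically $\newo/\bar m$, has exactly $p$ elements---this is (c).

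I expect the only genuine obstacle to be this last case: producing the splitting $p=\gamma_{1}\gamma_{2}$ (which is exactly the existence of a primitive cube root of $1$ modulo $p$) and ruling out $m\sim\bar m$. The $p=3$ and $p\equiv 2 \mod{3}$ cases and all the element-counts are short computations with the norm.
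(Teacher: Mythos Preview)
Your proof is correct and follows essentially the paper's approach: the same arguments for (a), (b), and (d), and the same key idea for (c) (a cube root of unity modulo $p$ forces $p$ to be reducible in \newo). Your sub-arguments in (c) differ only in detail---for $m\not\sim\bar m$ the paper normalizes $m/\bar m$ to $\pm 1$ and writes $m=au+bv$ to read off $N(m)=a^{2}$ or $3a^{2}$, and for $|\newo/m|=p$ it shows concretely that $u$ is congruent to an integer mod $m$---but the overall structure is the same.
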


\begin{proofs}
$m$ divides the positive integer $m\bar{m}$. Writing $m\bar{m}$ as a product of powers of primes and applying Theorem \ref{theoremA.3}, we get (\ref{theoremA.5.a}).

Suppose $p\equiv 2 \mod{3}$. If $m$ is not an associate of $p$ its norm must be $p$, and so $m\bar{m}=p$. If $m=a+bu$, then $p=N(m)=(a+b)^{2}-3ab$. We conclude that $p\equiv 0$ or $1\bmod{3}$, contrary to our assumption. If follows that $m$ is an associate of $p$, and then $\newo/m=\newo/p$, which evidently has $p^{2}$ elements.

Suppose $p\equiv 1 \mod{3}$. Then $(\newz/p)^{*}$ is a group whose order is divisible by 3. So $\newz/p$ contains a subgroup $\{1,c,c^{2}\}$ of order 3, with $c$ in \newz, $1<c<p$. The polynomial $x^{3}-1$ factorizes over $\newz/p$ as $(x-1)(x-c)(x-c^{2})$. So in $\newo/p$, $(u-1)(u-c)(u-c^{2})=u^{3}-1=0$, $p$ divides a product in \newo\ without dividing any of the factors, and $\newo/p$ is not a field. We conclude that $p$ is reducible in \newo, and that $p=m\bar{m}$. If $m$ and $\bar{m}$ were associates then $m/\bar{m}$ would be a unit. Replacing $m$ by $um$ or $vm$ if necessary we arrange that the unit is 1 or $-1$. Write $m$ as $au+bv$. Then $b$ is $a$ or $-a$, and $N(m)$ is $a^{2}$ or $3a^{2}$, neither of which is $p$. We conclude that $m$ and $\bar{m}$ are not associates.

Furthermore the integers $0,1,\ldots,p-1$ represent distinct congruence classes in $\newo/m$. Since $x^{3}-1$ factors as $(x-1)(x-c)(x-c^{2})$ over $\newo/m$, and $\newo/m$ is a field, $u\equiv 1$, $c$, or $c^{2}\bmod{m}$.  So any \newz-linear combination of 1, $u$ and $v$ is congruent to an integer mod $m$, and $\newo/m$ cannot have more than $p$ elements. The same holds for $\newo/\bar{m}$. And as $3=(-1)\beta^{2}$ we find that an irreducible dividing 3 can only be an associate of $\beta$.
\qed
\end{proofs}

\renewcommand{\thesection}{\arabic{section}}
\setcounter{section}{0} 
\section{Some results of Euler, Legendre, Kummer and Sylvester, \mbox{together} with analogs} 
\label{section1}

Theorem \ref{theorem1.1} puts no restrictions on the non-zero element $M$ of \newo\ whose representation as a sum of two cubes we're about to study. Though it has its uses, more to the point is the partial converse, Theorem \ref{theorem1.2} (and a few variants of Theorem \ref{theorem1.2} based on the same ``3-descent'' technique). The entire note flows from Theorem \ref{theorem1.2}.

\begin{theorem}
\label{theorem1.1}\hspace{2em}
Let $r$, $s$, $t$ and $M$ be non-zero elements of \newo. If $ur^{3}+vs^{3}+Mt^{3}=0$, then $M=x^{3}+y^{3}$ for some $x$ and $y$ in \newk.
\end{theorem}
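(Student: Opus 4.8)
The plan is to turn the given cubic relation into the statement that three explicit elements of \newk\ share a common symmetry, and then read off the two cubes. Since $r,s,t$ are nonzero I apply the elementary identity $x^{3}+y^{3}+z^{3}-3xyz=(x+y+z)(x^{2}+y^{2}+z^{2}-xy-yz-zx)$ to $x=ur^{3}$, $y=vs^{3}$, $z=Mt^{3}$; as these sum to $0$ the product on the right vanishes, so $(ur^{3})^{3}+(vs^{3})^{3}+(Mt^{3})^{3}=3(ur^{3})(vs^{3})(Mt^{3})$, i.e.\ (using $u^{3}=v^{3}=1$ and $uv=1$) $r^{9}+s^{9}+M^{3}t^{9}=3M(rst)^{3}$. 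Dividing by $(rst)^{3}$ and setting $\xi=r^{2}/(st)$, $\eta=s^{2}/(rt)$, $\zeta=Mt^{2}/(rs)$ — all in \newk\ because $r,s,t,M$ are nonzero — I obtain the two identities $\xi\eta\zeta=M$ and $\xi^{3}+\eta^{3}+\zeta^{3}=3M$, hence $\xi^{3}+\eta^{3}+\zeta^{3}=3\xi\eta\zeta$.

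Applying the same factorization identity to $\xi,\eta,\zeta$ gives $(\xi+\eta+\zeta)(\xi^{2}+\eta^{2}+\zeta^{2}-\xi\eta-\eta\zeta-\zeta\xi)=0$. Granting for the moment that $\xi+\eta+\zeta\ne 0$, the second factor must vanish, i.e.\ $\xi^{2}+\eta^{2}+\zeta^{2}=\xi\eta+\eta\zeta+\zeta\xi$, which is the same as $(\xi+\eta+\zeta)^{2}=3(\xi\eta+\eta\zeta+\zeta\xi)$. Put $c=(\xi+\eta+\zeta)/3\in\newk$. A routine symmetric-function computation then shows that $\xi-c$, $\eta-c$, $\zeta-c$ have vanishing first and second elementary symmetric functions, while their third, $\xi\eta\zeta-c(\xi\eta+\eta\zeta+\zeta\xi)+c^{2}(\xi+\eta+\zeta)-c^{3}$, equals $M-c^{3}$ (using $\xi+\eta+\zeta=3c$ and $\xi\eta+\eta\zeta+\zeta\xi=3c^{2}$). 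Hence each of $\xi-c,\eta-c,\zeta-c$ is a root of $T^{3}-(M-c^{3})$; in particular $(\xi-c)^{3}=M-c^{3}$, so $M=(\xi-c)^{3}+c^{3}$, which is the desired representation with $x=\xi-c$ and $y=c$ in \newk.

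The one step that is not a purely formal manipulation, and the real content of the proof, is ruling out $\xi+\eta+\zeta=0$. Since $\xi+\eta+\zeta=(r^{3}+s^{3}+Mt^{3})/(rst)$, vanishing would force $r^{3}+s^{3}+Mt^{3}=0$; subtracting this from the hypothesis $ur^{3}+vs^{3}+Mt^{3}=0$ gives $(u-1)r^{3}=(1-v)s^{3}$, and since $1-v=v(u-1)$ with $u\ne 1$ this reduces to $r^{3}=vs^{3}$, an equation between nonzero elements of \newo. Comparing their factorizations into irreducibles (Theorem \ref{theoremA.4}) — and using that $v$ is a unit — forces $r$ and $s$ to be associates, say $r=\varepsilon s$ with $\varepsilon$ a unit, whence $\varepsilon^{3}=v$; but by Theorem \ref{theoremA.1} the cube of every unit of \newo\ is $1$ or $-1$, and $v$ is neither. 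This contradiction establishes $\xi+\eta+\zeta\ne 0$, so the construction above applies. I expect this non-vanishing step — equivalently, the fact that $v$ is not a cube in \newk\ — to be the only place where the arithmetic of \newo, rather than formal algebra, enters.
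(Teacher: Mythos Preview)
Your argument is correct, but it takes a considerably longer route than the paper's. The paper observes directly that the three linear forms $ux+vy$, $vx+uy$, $x+y$ multiply to $x^{3}+y^{3}$ (using $u^{2}=v$, $v^{2}=u$, $uv=1$), and simply solves the $2\times 2$ linear system $ux+vy=ur^{3}$, $vx+uy=vs^{3}$ over \newk; adding these two equations gives $x+y=Mt^{3}$, and multiplying all three gives $x^{3}+y^{3}=M(rst)^{3}$, after which one rescales by $rst$. That is the entire proof---no case analysis, no appeal to unique factorization, no use of the fact that $v$ is not a cube.

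Your approach instead passes twice through the identity $a^{3}+b^{3}+c^{3}-3abc=(a+b+c)(a^{2}+b^{2}+c^{2}-ab-bc-ca)$ and then performs a symmetric-function shift to extract the cube. This works, but it introduces a genuine extra obstacle: you must rule out $\xi+\eta+\zeta=0$, and to do so you invoke unique factorization in \newo\ and the fact that no unit cubes to $v$. The paper's linear-algebra trick sidesteps this entirely, since the system $ux+vy=A$, $vx+uy=B$ is always uniquely solvable (its determinant is $u^{2}-v^{2}=v-u=-\beta\ne 0$). So while both proofs are valid, the paper's buys simplicity and avoids any arithmetic input; yours buys nothing in return for the extra work, though the non-vanishing step you isolated is exactly the kind of obstruction that becomes essential in the converse direction (Theorem~\ref{theorem1.2}).
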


\begin{proof}
Take $x$ and $y$ in \newk\ satisfying the linear equations $ux+vy=ur^{3}$, $vx+uy=vs^{3}$. Adding together we get a third equation, $x+y=-(ur^{3}+vs^{3})=Mt^{3}$. Multiplying the three identities we find that $x^{3}+y^{3}=M(rst)^{3}$. Replacing $x$ and $y$ by $\frac{x}{rst}$ and $\frac{y}{rst}$ gives the result.
\qed
\end{proof}

\begin{theorem}
\label{theorem1.2}\hspace{2em}
Let $m$ be an irreducible element of \newo, and $M$ be an associate of $m$ or $m^{2}$. Suppose $x$ and $y$ are in \newk, $x^{3}\ne y^{3}$, and $x^{3}+y^{3}=M$. Then the equation $ur^{3}+vs^{3}+Mt^{3}=0$ has a solution $(r,s,t)$ with $r$, $s$ and $t$ in \newo, $r$ and $s$ prime to $m$ and $rst\ne 0$.
\end{theorem}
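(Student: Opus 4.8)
The plan is to carry out a single step of $3$-descent: clear denominators in $x^{3}+y^{3}=M$, factor the left-hand side over \newo, and extract $r$, $s$, $t$ using unique factorization (Theorems \ref{theoremA.3} and \ref{theoremA.4}). First, choose $e\ne 0$ in \newo\ of least norm with $a:=ex$ and $b:=ey$ in \newo; then no irreducible of \newo\ divides all of $a$, $b$, $e$, and $a^{3}+b^{3}=Me^{3}$. Since $M$ is an associate of $m$ or $m^{2}$, its only irreducible factor is $m$, so a common irreducible factor $\pi$ of $a$ and $b$ would be prime to $e$, whence $\pi^{3}\mid a^{3}+b^{3}=Me^{3}$ would force $\pi^{3}\mid M$ --- impossible. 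Thus $a$ and $b$ are coprime.

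Next I would use $a^{3}+b^{3}=(a+b)(a+ub)(a+vb)$ (valid as $u+v=-1$, $uv=1$) together with the identity $(a+b)+u(a+ub)+v(a+vb)=0$ (the coefficient of $b$ being $1+u^{2}+v^{2}=1+v+u=0$). Write $A,B,C$ for the three factors. Their pairwise differences are $(1-u)b$, $(1-v)b$, $(u-v)b$, each an associate of $\beta b$ with $\beta=u-v$; since $\gcd(a,b)=1$ this shows the only irreducible that can divide two of $A,B,C$ is $\beta$, and then $\beta$ divides either all three or none. As $m$ divides $A\cdot B\cdot C=Me^{3}$ it divides at least one factor, and since $(x,y)\mapsto(ux,vy)$ is a symmetry of $x^{3}+y^{3}=M$ carrying $(A,B,C)$ to $(uB,uC,uA)$, I may relabel (applying it or its square) so that $m$ divides $A$ --- and, should $\beta$ divide all three, so that $A$ has largest $\beta$-adic valuation. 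After dividing a copy of $\beta$ out of each factor in that case (the identity survives), I reach a situation in which $A,B,C$ are pairwise coprime and $m$ divides only $A$. Now in $A\cdot B\cdot C=Me^{3}$ the cube $e^{3}$ contributes only cube prime-powers, so unique factorization yields $B=\varepsilon_{1}r^{3}$, $C=\varepsilon_{2}s^{3}$, $A=\varepsilon_{3}Mt^{3}$ with $r,s,t$ in \newo, $r$ and $s$ prime to $m$, and units $\varepsilon_{i}$ whose product $\varepsilon_{1}\varepsilon_{2}\varepsilon_{3}$ is a cube.

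Substituting into the identity gives $\varepsilon_{3}Mt^{3}+u\varepsilon_{1}r^{3}+v\varepsilon_{2}s^{3}=0$. Each $\varepsilon_{i}$ matters only modulo cubes of units, i.e.\ modulo $\pm1$, so I may take $\varepsilon_{i}\in\{1,u,v\}$; the nine such triples with $\varepsilon_{1}\varepsilon_{2}\varepsilon_{3}=1$ fall, under the symmetries $(x,y)\mapsto(\zeta_{1}x,\zeta_{2}y)$ (with $\zeta_{i}^{3}=1$) and $(x,y)\mapsto(y,x)$ --- which act on $(\varepsilon_{1},\varepsilon_{2},\varepsilon_{3})$ by simultaneous rotation and by $(\varepsilon_{1},\varepsilon_{2},\varepsilon_{3})\mapsto(u\varepsilon_{2},v\varepsilon_{1},\varepsilon_{3})$ --- into the orbit of $(1,1,1)$ and one further orbit. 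A congruence argument modulo a power of $\beta$ (using $w^{3}\equiv\pm1\bmod\beta^{4}$ whenever $\beta\nmid w$), together with the hypothesis $x^{3}\ne y^{3}$, rules out the second orbit: the configurations forced into it are precisely the degenerate ones with $x^{3}=y^{3}$, which compel $M$ to be an associate of $2$. Hence a suitable symmetry brings $(\varepsilon_{1},\varepsilon_{2},\varepsilon_{3})$ to $(1,1,1)$, that is, $ur^{3}+vs^{3}+Mt^{3}=0$.

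Finally $rst\ne0$: if $t=0$ then $A=x+y=0$, so $M=x^{3}+y^{3}=0$; if $r=0$ then $x+uy=0$, so $x^{3}=-y^{3}$ and again $M=0$; similarly $s\ne0$; each contradicts $M\ne0$. The step I expect to cause the real difficulty is the bookkeeping at the prime $\beta$ above $3$ --- where $A,B,C$ need not be pairwise coprime --- and the accompanying congruence eliminating the unwanted unit-orbit, which is exactly where $x^{3}\ne y^{3}$ is used; everything else is routine use of unique factorization in \newo.
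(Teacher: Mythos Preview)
Your setup differs from the paper's --- you factor $a^{3}+b^{3}=(a+b)(a+ub)(a+vb)$ and use the linear relation $A+uB+vC=0$, whereas the paper works with the triple $(Dx^{3},Dy^{3},-DM)$ satisfying $A+B+C=0$ --- but both routes lead to the same dichotomy: either one obtains $ur^{3}+vs^{3}+Mt^{3}=0$ (the desired conclusion) or one obtains $r^{3}+s^{3}+Mt^{3}=0$ (your ``second orbit'', the paper's case $i=j=1$). The whole content of the theorem is ruling out the second possibility, and this is where your sketch breaks down.

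You propose to eliminate the bad orbit by a congruence modulo a power of $\beta$ together with the hypothesis $x^{3}\neq y^{3}$. But no congruence can exclude $r^{3}+s^{3}+Mt^{3}=0$ globally: that equation is \emph{equivalent} to $M$ being a sum of two cubes in \newk\ (set $x=-r/t$, $y=-s/t$), which is exactly your hypothesis. For instance when $M=17$ one has $18^{3}+(-1)^{3}+17\cdot(-7)^{3}=0$. So at best you could hope that the \emph{particular} $r,s,t$ produced by your construction never fall into the bad orbit; but you give no argument for this, and the single observation that $(x,y)=(1,1)$ with $M=2$ lands there does not show that only the degenerate solutions do. The paper handles this step by genuine infinite descent: it does \emph{not} work with the triple coming from the given $(x,y)$, but rather chooses, among \emph{all} triples $(A,B,C)$ of distinct elements of \newo\ with $A+B+C=0$ and $ABC=M\cdot(\text{non-zero cube})$, one with $N(ABC)$ minimal. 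If that minimal triple were in the bad orbit, so $A=r^{3}$, $B=s^{3}$, $C=Mt^{3}$, the paper builds $A'=ur+vs$, $B'=vr+us$, $C'=r+s$; these again sum to $0$, their product is $r^{3}+s^{3}=-C$, and $N(A'B'C')=N(C)\le N(ABC)$ with equality forcing $A,B$ to be unit cubes, an impossibility since $A\neq B$ and $A+B\neq 0$. This minimality-plus-descent step is precisely the idea your sketch is missing; a single pass through the factorisation, followed by a local congruence, does not replace it.
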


\begin{remark*}
Since $1+1=2$, the stipulation $x^{3}\ne y^{3}$ is necessary when $M$ is 2 or $-2$. For then, $ur^{3}+vs^{3}$ is congruent mod $2$ to $u+v=-1$, and cannot be~$-Mt^{3}$.
\end{remark*}

\begin{proof}
Note first that there is a triple $A,B,C$ of distinct elements of \newo\ with $A+B+C=0$, and $ABC$ the product of $M$ by a non-zero cube. (We get such a triple by multiplying $x^{3}$, $y^{3}$ and $-M=-x^{3}-y^{3}$ by a common denominator $D$. For then $ABC=(-xyD)^{3}\cdot M$. Note that $x^{3}$, $y^{3}$ and $-x^{3}-y^{3}$ are distinct.) Choose such a triple with $N(ABC)$ as small as possible.

We'll show that such a triple provides the desired $r$, $s$ and $t$. If $A$, $B$ and $C$ had a common irreducible factor, we could divide them all by that factor. So the minimality assumption tells us they're pairwise coprime, and we may assume $m$ divides $C$. Using unique  factorization in \newo\ and the irreducibility of $m$ we find that $A=ir^{3}$, $B=js^{3}$, $C=kMt^{3}$ where $i$, $j$ and $k$ are units of \newo. Replacing $i$ and $r$ by $-i$ and $-r$ doesn't change $A$. So we may assume $i\in\{1,u,v\}$. Similarly we arrange that $j$ and $k$ are in $\{1,u,v\}$. Now $ABC=(ijk)M(rst)^{3}$ is the product of $M$ by a non-zero cube. Since neither $u$ nor $v$ is a cube in \newk, $ijk=1$. Dividing $A$, $B$ and $C$ by $k$ we arrange that $k=1$. So $ij=1$, and it remains to show that $i\ne j$.

Suppose to the contrary that $i=j=1$ so that $A$, $B$ and $C$ are $r^{3}$, $s^{3}$ and $Mt^{3}$. Since $m$ divides $C$, it divides $A+B=(r+us)(r+vs)(r+s)$, and therefore divides a factor. (When $m=\beta$, the three factors are congruent mod $\beta$, and $\beta$ divides each of them.)  We are free to replace $s$ by $us$ or $vs$, and so may assume $m$ divides $r+s$.  Now let $A^{\prime}$, $B^{\prime}$ and $C^{\prime}$ be $(ur+vs)$, $(vr+us)$ and $r+s$. Then $A^{\prime}+B^{\prime}+C^{\prime}=0$, while $A^{\prime}B^{\prime}C^{\prime}=r^{3}+s^{3}=-Mt^{3}$; the product of $M$ by a non-zero cube.  If $A^{\prime}=B^{\prime}$, $ur+vs=vr+us$, $r=s$ and $A=B$ which is forbidden. If 
$C^{\prime}$ equalled $A^{\prime}$ or $B^{\prime}$ a similar argument would again give the forbidden equality. So $A^{\prime},B^{\prime},C^{\prime}$ is another triple with the desired properties. Now $A^{\prime}B^{\prime}C^{\prime}$ is, as we've seen, $r^{3}+s^{3}=-C$. So $N(ABC)=N(A)N(B)N(C)\ge N(C)=N(A^{\prime}B^{\prime}C^{\prime})$. The minimality assumption tells us that $A$ and $B$ are units. Since $A$ and $B$ are cubes each of them is 1 or $-1$. As $A\ne B$ and $A+B\ne 0$ we have a contradiction.
\qed
\end{proof}

We now combine Theorem \ref{theorem1.2} with a mod $m$ congruence argument to recover (and strengthen) an old result of Sylvester.

\begin{theorem}
\label{theorem1.3}\hspace{2em}
Let $p$ in \newz\ be a prime congruent to $2$ or $5\bmod{9}$, and $M$ be an associate of $p$ or $p^{2}$. If $M\ne 2$ (or $-2$) there are no $x$ and $y$ in \newk\ with $x^{3}+y^{3}=M$. When $M=2$ the only solutions are the obvious nine with $x^{3}=1$ and $y^{3}=1$. The same of course holds when $M=-2$ with $1$ replaced by $-1$. 
\end{theorem}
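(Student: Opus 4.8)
The plan is to combine Theorem~\ref{theorem1.2} with a single congruence modulo $m$. Since $p\equiv 2$ or $5\bmod 9$, in particular $p\equiv 2\bmod 3$, so Theorem~\ref{theoremA.5} tells us $p$ is irreducible in \newo; I would set $m=p$, so that $\newo/m$ is a field with $p^{2}$ elements and ``prime to $m$'' means ``prime to $p$''. Since $M$ is an associate of $p=m$ or of $p^{2}=m^{2}$, we have $m\mid M$. Now suppose $x,y$ in \newk\ satisfy $x^{3}+y^{3}=M$, and split into the cases $x^{3}=y^{3}$ and $x^{3}\ne y^{3}$.

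In the case $x^{3}=y^{3}$ one has $M=2x^{3}$ with $x\ne 0$. Here I would invoke unique factorization (Theorem~\ref{theoremA.4}), extended to the field \newk\ so that the exponent of each distinguished irreducible lies in \newz, and compare, in $M=2x^{3}$, the exponent of the irreducible $2$ (it is irreducible since $2\equiv 2\bmod 3$). When $p$ is odd, $2$ is prime to $p$, so its exponent in $M$ is $0$, while in $2x^{3}$ it is of the form $1+3k$ with $k\in\newz$---impossible; so this case does not occur. When $p=2$, the same comparison forces the exponent of $2$ in $x$ to be $0$, hence (together with the other distinguished irreducibles) forces $x$, and therefore $y$, to be a unit; then $x^{3}=y^{3}\in\{1,-1\}$ and $M=2x^{3}$ is $2$ or $-2$. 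Conversely, the nine pairs $(x,y)$ with $x,y\in\{1,u,v\}$ (resp.\ $x,y\in\{-1,-u,-v\}$) all solve $x^{3}+y^{3}=2$ (resp.\ $-2$), so these are exactly the solutions with $x^{3}=y^{3}$.

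In the case $x^{3}\ne y^{3}$---where the hypothesis on $p\bmod 9$ is actually used---I would apply Theorem~\ref{theorem1.2}: there are $r,s,t$ in \newo\ with $rst\ne 0$, with $r$ and $s$ prime to $m$, and with $ur^{3}+vs^{3}+Mt^{3}=0$. Reducing this mod $m$ and using $m\mid M$ gives $ur^{3}\equiv -vs^{3}\bmod m$; since $r$ and $s$ are prime to $m$, and $uv=1$ and $v^{2}=u$, dividing by $s^{3}$ and multiplying by $v$ rearranges this to $u\equiv(-r/s)^{3}\bmod m$, so $u$ is a cube in the field $\newo/m$. But $u$ has order exactly $3$ in $(\newo/m)^{*}$: it is a cube root of $1$, and $u\not\equiv 1\bmod m$ because $m\mid u-1$ would force $N(m)=p^{2}$ to divide $N(u-1)=3$. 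If $u=z^{3}$ in $(\newo/m)^{*}$, then $z^{9}=1$ while $z,z^{3}\ne 1$, so $z$ has order $9$, and Lagrange gives $9\mid p^{2}-1$, i.e.\ $p^{2}\equiv 1\bmod 9$. For a prime $p\equiv 2\bmod 3$ this holds only when $p\equiv 8\bmod 9$, contrary to hypothesis. Hence there is no solution with $x^{3}\ne y^{3}$, and with the previous paragraph this proves the theorem.

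The real content is concentrated in the last paragraph: collapsing the conclusion of Theorem~\ref{theorem1.2} to the single statement ``$u$ is a cube mod $m$'', and then recognizing that this fails precisely because $9\nmid p^{2}-1$ when $p\equiv 2$ or $5\bmod 9$ (using the elementary fact that a cube root of an element of order $3$ has order $9$). Identifying $m$ with $p$, checking $m\mid M$, and the exponent bookkeeping in the case $x^{3}=y^{3}$ are all routine.
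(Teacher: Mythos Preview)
Your argument is correct and matches the paper's proof almost exactly: both set $m=p$ (irreducible since $p\equiv 2\bmod 3$), invoke Theorem~\ref{theorem1.2}, reduce $ur^{3}+vs^{3}+Mt^{3}=0$ modulo $p$ to conclude that $u$ is a cube in $(\newo/p)^{*}$, and then observe that this forces an element of order $9$ in a group of order $p^{2}-1\equiv 3$ or $6\bmod 9$. You are in fact more careful than the paper, which simply says ``we are in the situation of Theorem~\ref{theorem1.2}'' without verifying the hypothesis $x^{3}\ne y^{3}$; your exponent-of-$2$ bookkeeping fills that gap cleanly (one tiny wording issue: when $p=2$ and $M$ is an associate of $4$, the comparison gives $1+3k=2$, which is impossible, rather than $k=0$---but your conclusion that the case $x^{3}=y^{3}$ forces $M=\pm 2$ is correct).
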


\begin{proof}
Suppose on the contrary $x^{3}+y^{3}=M$. Since $p$ is $2\bmod{3}$ it is irreducible in \newo, we are in the situation of Theorem \ref{theorem1.2} with $m=p$, and there are $r,s,t$ in \newo\ with $ur^{3}+vs^{3}+Mt^{3}=0$, $rst\ne 0$, and $r$ and $s$ prime to $p$. Then in the field $\newo/p$, $ur^{3}+vs^{3}=0$, and so $-(\frac{r}{s})^{3}=\frac{v}{u}=u$. So $(\newo/p)^{*}$ contains an element of multiplicative order 9. As $p$ is $2$ or $5\bmod{9}$, $p^{2}-1$ is $3$ or $6\bmod{9}$, giving a contradiction.
\qed
\end{proof}

\begin{corollary*}[1 (Pépin, Sylvester, Lucas)]
\label{corollary1.3.1}\hspace{2em}
If $p>2$ is a prime congruent to $2$ or $5\bmod{9}$, neither $p$ nor $p^{2}$ is a sum of two rational cubes.
\end{corollary*}

\begin{corollary*}[2 (Legendre, Euler)]
\label{corollary1.3.2}\hspace{2em}
$x^{3}+y^{3}=4$ has no solutions in \newq. The only solution to $x^{3}+y^{3}=2$ in \newq\ is $(1,1)$.
\end{corollary*}

\begin{remark*}
In contrast there are infinitely many points on the circle $x^{2}+y^{2}=2$ with $x$ and $y$ in \newq. (If one has such a point other than $(1,1)$ then the line joining it to $(1,1)$ has rational slope; conversely when $a$ is the slope of a line (other than the tangent line) through $(1,1)$, and $a$ is rational, that line meets the circle in another rational point.  For example $y=2x-1$ meets the circle at $(-\frac{1}{5}, -\frac{7}{5})$.)  As a consequence there are many triples of integer squares in arithmetic progression---$(\frac{1}{5},\frac{7}{5})$ is on the circle, and $1^{2},5^{2},7^{2}$ are in progression. With cubes it's different.
\end{remark*}

\begin{corollary*}[3 (Legendre)]
\label{corollary1.3.3}\hspace{2em}
Three distinct non-zero cubes cannot lie in arithmetic progression.
\end{corollary*}

For suppose $x^{3}<z^{3}<y^{3}$ are the cubes. By assumption $z\ne 0$.  Then $(\frac{x}{z})^{3}+(\frac{y}{z})^{3}=2$. So $(\frac{x}{z})=(\frac{y}{z})=1$, contradicting distinctness.

\begin{corollary*}[4 (Extending a result of Euler from \newq\ to \newk)]
\label{corollary1.3.4}\hspace{2em}
If $x$ and $y$ are in \newk\ and $y^{2}=x^{3}+1$, then $x^{3}$ is $-1$, $0$ or $8$.
\end{corollary*}

\begin{proof}
There are evidently no $r,s,t$ in \newo\ with $ur^{3}+vs^{3}+2t^{3}=0$ and $rst\ne 0$. Following the descent argument of Theorem \ref{theorem1.2} we conclude that there is no triple of distinct $A,B,C$ in \newo\ with $A+B+C=0$ and $ABC$ the product of 2 by a non-zero cube. It follows that the same holds for $A$, $B$ and $C$ in \newk. Now let $A$, $B$ and $C$ be $1-y$, $1+y$ and $-2$. They sum to zero, and their product is $2(y^{2}-1)=2x^{3}$. So some two of these $A$, $B$ and $C$ are equal, and we get the result by examining the possibilities.  Since $y^{2}$ is 0, 1, or 9, $x^{3}$ is $-1$, 0, or~8.
\qed
\end{proof}

\begin{theorem}
\label{theorem1.4}\hspace{2em}
If $m$ in \newo\ is irreducible with norm a prime congruent to $4$ or $7\bmod{9}$ then no associate $M$ of $m$ or $m^{2}$ is $x^{3}+y^{3}$ with $x$ and $y$ in \newk.
\end{theorem}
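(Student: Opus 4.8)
The plan is to run the argument of Theorem~\ref{theorem1.3} essentially verbatim, performing the congruence step modulo the irreducible $m$ in place of a rational prime. Set $p=N(m)$; by hypothesis $p$ is an odd prime with $p\equiv 1\bmod 3$. The first ingredient I would record is that, by Theorem~\ref{theoremA.5}(\ref{theoremA.5.c}), $\newo/m$ is a field with exactly $p$ elements, so $(\newo/m)^{*}$ is cyclic of order $p-1$; and that the image $\bar u$ of $u$ in $\newo/m$ is a \emph{primitive} cube root of unity: it satisfies $\bar u^{3}=1$, and it cannot equal $1$, since $\bar u=1$ would give $m\mid u-1$, hence $p\mid N(u-1)=3$, whereas $p\ne 3$.

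Next, assume for contradiction that $M=x^{3}+y^{3}$ with $x,y\in\newk$. Before invoking Theorem~\ref{theorem1.2} I would check that $x^{3}\ne y^{3}$: otherwise $M=2x^{3}$, and writing $x=a/b$ with $a,b\in\newo$ having no common irreducible factor gives $2a^{3}=Mb^{3}$; comparing how many times the irreducible $2$ of $\newo$ (Theorem~\ref{theoremA.5}(\ref{theoremA.5.b})) divides each side forces the exponent of $2$ in $M$ to be $\equiv 1\bmod 3$, which is impossible since $N(M)\in\{p,p^{2}\}$ is odd and hence $2\nmid M$. So Theorem~\ref{theorem1.2} applies and yields $r,s,t\in\newo$ with $ur^{3}+vs^{3}+Mt^{3}=0$, $rst\ne 0$, and $r,s$ prime to $m$. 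Reducing this identity modulo $m$ and using $m\mid M$, I obtain $\bar u\bar r^{3}+\bar v\bar s^{3}=0$ in $\newo/m$ with $\bar r,\bar s$ invertible; since $uv=1$ and $u^{3}=1$ this rearranges to $(-\bar r/\bar s)^{3}=\bar u$. Thus $(\newo/m)^{*}$ contains an element $z$ with $z^{3}=\bar u$; as $\bar u$ has order $3$, $z$ has order $9$, and therefore $9\mid p-1$. But $p\equiv 4\bmod 9$ gives $p-1\equiv 3\bmod 9$, and $p\equiv 7\bmod 9$ gives $p-1\equiv 6\bmod 9$: in neither case is $p-1$ divisible by $9$, a contradiction.

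I do not expect a genuine obstacle here. Once Theorem~\ref{theorem1.2} is in hand the rest is a two-line order count in a cyclic group, and the only structural difference from Theorem~\ref{theorem1.3} is that now $m$ \emph{splits}, so its residue field has $p$ rather than $p^{2}$ elements and the divisibility conclusion sharpens from ``$9\mid p^{2}-1$'' to ``$9\mid p-1$''---exactly what rules out the residues $4$ and $7$ modulo $9$ (the residues $2$ and $5$ being those treated in Theorem~\ref{theorem1.3}). The only place asking for a bit of care is the verification that $x^{3}\ne y^{3}$, which is needed before Theorem~\ref{theorem1.2} can legitimately be applied.
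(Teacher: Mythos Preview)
Your proof is correct and follows exactly the paper's route: invoke Theorem~\ref{theorem1.2}, reduce modulo $m$, and obtain an element of order $9$ in $(\newo/m)^{*}$, contradicting $p-1\equiv 3$ or $6\bmod 9$. If anything you are slightly more careful than the paper, which simply invokes Theorem~\ref{theorem1.2} without pausing to check the hypothesis $x^{3}\ne y^{3}$; your verification of that point (via the $2$-adic valuation) and your explicit check that $\bar u\ne 1$ in $\newo/m$ are both sound and fill in details the paper leaves implicit.
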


\begin{proof}
If $x^{3}+y^{3}=M$, Theorem \ref{theorem1.2} tells us that there are $r,s,t$ in \newo, $rst\ne 0$, with $r$ and $s$ prime to $m$ and $ur^{3}+vs^{3}+Mt^{3}=0$. As in the proof of Theorem \ref{theorem1.3} we find that $ur^{3}+vs^{3}=0$ in the field $\newo/m$, and that $(\newo/m)^{*}$ contains an element of order 9. But this group has order $p-1$, and $p-1$ is $3$ or $6\bmod{9}$.
\qed
\end{proof}

There are a number of simple $M$ we haven't yet discussed---when $m=\beta$ so that $M$ is an associate of $\beta$ or 3, and when $N(m)\equiv 1\mod{9}$. We're also interested in the situation when $M$ is a unit. $\newo/\beta$ is too small for our purposes in these studies, as 1, $u$ and $v$ are all congruent mod $\beta$. Instead we'll work in $\newo/3$, a ring with 9 elements, and we'll even make use of mod $9$ congruences in \newo.  Note that every element, $w$, of \newo\ is congruent mod $3$ to either $0$, $\beta$, $-\beta$ or one of the six units, and that the cube of each unit is 1 or $-1$. So if $w$ in \newo\ is prime to $\beta$, $w$ is the product of a unit by some $1+3t$, and $w^{3} = \pm 1\cdot (1+9t+27t^{2}+27t^{3})$, and so is 1 or $-1\bmod{9}$.

We first deal with the most historically interesting situation, when $M=1$. The technique is akin to that used in the proof of Theorem \ref{theorem1.2}, but there's a variation.

\begin{definition*}
\label{def1.1} 
If $D$ is a non-zero element of \newo, $\ord_{\beta}(D)$ is the largest $n$ such that $\beta^{n}$ divides $D$.
\end{definition*}

\begin{theorem}
\label{theorem1.5}\hspace{2em}
If $A$, $B$ and $C$ are elements of \newo\ summing to $0$, with product a non-zero cube, then $\ord_{\beta}(A)=\ord_{\beta}(B)=\ord_{\beta}(C)$. (Note that $1$, $u$ and $v$ sum to $0$ with product $1$.)
\end{theorem}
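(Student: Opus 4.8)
The plan is an infinite descent, organized just like the proof of Theorem~\ref{theorem1.2} except that reduction modulo $9$ does the work done there by reduction modulo an irreducible. First I reduce to a coprime situation. Let $g$ be a greatest common divisor of $A$, $B$, $C$ and write $A=gA_{1}$, $B=gB_{1}$, $C=gC_{1}$; then $A_{1}+B_{1}+C_{1}=0$, and, comparing $\pi$-valuations for each irreducible $\pi$ and using unique factorization, $g$ divides the cube root of $ABC$, so $A_{1}B_{1}C_{1}$ is again a nonzero cube. A common irreducible factor of two of $A_{1},B_{1},C_{1}$ would, via $A_{1}+B_{1}+C_{1}=0$, divide the third and contradict the minimality of $g$; hence $A_{1},B_{1},C_{1}$ are pairwise coprime, and therefore (the product being a cube) each is a unit times a cube, so each $\ord_{\beta}(A_{i})$ is a multiple of $3$ and, by coprimeness, at most one of them is positive. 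Since $\ord_{\beta}(A)=\ord_{\beta}(g)+\ord_{\beta}(A_{1})$ and likewise for $B$ and $C$, the theorem is equivalent to the statement that none of $A_{1},B_{1},C_{1}$ is divisible by $\beta$. Assume not. Then there is a \textit{bad} triple---nonzero, pairwise coprime, summing to $0$, with product a unit times a nonzero cube, and with exactly one entry divisible by $\beta$---and I choose one, again written $(A,B,C)$, for which $N(ABC)$ is least.

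Next I squeeze out the mod-$9$ information. Say $\beta$ divides $C$. Because the product is a unit times a cube and the entries are pairwise coprime, we may write $A=ir^{3}$, $B=js^{3}$, $C=kw^{3}$ with $r,s$ prime to $\beta$, with $\ord_{\beta}(w)=m\ge 1$, and---after replacing $r$ or $s$ by its negative---with $i,j\in\{1,u,v\}$. By the computation just before the theorem, $r^{3}$ and $s^{3}$ are $\pm1\bmod 9$, so $A+B\equiv\pm i\pm j\bmod 9$, a sum $e+e'$ of two units; inspecting the six units one sees that such a sum either vanishes or has $\ord_{\beta}$ equal to $0$ or $1$. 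But $A+B=-C$ has $\ord_{\beta}$ exactly $3m$. So $m\ne 1$: if $m=1$ then $\ord_{\beta}(A+B)=3$, which is impossible for an element that is congruent mod $\beta^{4}=9$ to a sum of two units. Hence $m\ge 2$, so $9$ divides $C$, so $\pm i\pm j\equiv 0\bmod 9$; as distinct units differ by a nonzero element of norm $\le 4$, hence are incongruent mod $9$, this forces $\pm i=\mp j$, and since $i,j\in\{1,u,v\}$ we conclude $i=j$.

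Now the descent step. With $i=j$ and $m\ge2$ the relation $A+B+C=0$ says $i(r^{3}+s^{3})=-kw^{3}$, i.e. $(r+s)(r+us)(r+vs)=(\mathrm{unit})\,w^{3}$. As $\beta$ divides the right-hand side and the three factors are mutually congruent mod $\beta$, $\beta$ divides each; and since $r,s$ are coprime while $u-1$, $v-1$ and $u-v$ are all associates of $\beta$, a one-line gcd calculation shows any two of the three factors have greatest common divisor exactly $\beta$. Writing $r+s=\beta X$, $r+us=\beta Y$, $r+vs=\beta Z$, the elements $X,Y,Z$ are pairwise coprime, satisfy $X+uY+vZ=0$ (because $1+u+v=0$ and $u^{2}=v$, $v^{2}=u$), and have product $XYZ=(\mathrm{unit})\,w^{3}/\beta^{3}=(\mathrm{unit})(w/\beta)^{3}$, a unit times a nonzero cube; and since $\ord_{\beta}(w/\beta)=m-1\ge 1$, exactly one of $X,Y,Z$ is divisible by $\beta$. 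They are nonzero (for instance $r+s=0$ would make $r$ and $s$ associate units, so $C=-A-B$ would be a sum of two units and could not satisfy $\ord_{\beta}(C)\ge 6$; the other two are handled the same way). Thus $(X,uY,vZ)$ is again a bad triple, yet $N(XYZ)=\bigl(N(w)/3\bigr)^{3}<N(w)^{3}\le N(ABC)$, contradicting minimality. The only step that really needs care is the mod-$9$ bookkeeping of the middle paragraph---ruling out $m=1$ and forcing $i=j$ when $m\ge2$; once that is in hand, the gcd facts, the identity $X+uY+vZ=0$, and the norm estimate are all routine.
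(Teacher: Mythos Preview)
Your proof is correct and follows essentially the same $3$-descent as the paper: reduce to a pairwise coprime triple, use congruences mod powers of $\beta$ to force the unit coefficients on the two $\beta$-prime entries to agree, then factor $r^{3}+s^{3}=(r+s)(r+us)(r+vs)$, divide through by $\beta$, and obtain a smaller triple of the same type. The only cosmetic differences are that the paper normalizes to $i=j=k=1$ (using $ijk=1$) so the product stays an honest cube throughout, and it arranges $r\equiv 1$, $s\equiv -1\pmod 3$ to read off directly which of the three new entries carries the $\beta$-divisibility, whereas you carry an extra unit and instead prove $m\ge 2$ by a mod-$9$ obstruction; your new triple $(X,uY,vZ)$ is in fact a permutation of the paper's $(A',B',C')$.
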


\begin{proof}
Suppose there is a counterexample. Choose $A$, $B$ and $C$ with $N(ABC)$ as small as possible. Then we may arrange that $A$, $B$ and $C$ are pairwise coprime, and that $A$ and $B$ are prime to $\beta$. Since we have a counterexample, $\beta$ divides $C$. Since $ABC$ is a cube, 3 divides $C$. Using unique factorization we arrange that $A=ir^{3}$, $B=js^{3}$, $C=kt^{3}$, with $\beta$ dividing $t$, $i$, $j$ and $k$ in $\{1,u,v\}$ and $ijk=1$. Since 3 divides $C$, $A=ir^{3}+js^{3}\equiv 0\mod{3}$, and so either $i+j$ or $i-j$ is divisible by 3. This is impossible unless $i=j$, and so we may arrange that $A=r^{3}$, $B=s^{3}$, $C=t^{3}$ with $\beta$ dividing $t$. We're free to multiply $r$ and $s$ by elements of $\{1,u,v\}$. Furthermore 3 divides $A+B$, so we may assume $r^{3}\equiv 1\mod{3}$ while $s^{3}\equiv -1\mod{3}$. We then arrange that $r\equiv 1\mod{3}$ while $s\equiv -1\mod{3}$. Now consider $ur+vs$, $vr+us$ and $r+s$. Mod 3 they are congruent to $\beta$, $-\beta$ and 0 respectively. So they are all divisible by $\beta$; let $A^{\prime}$, $B^{\prime}$ and $C^{\prime}$ be their quotients by $\beta$. Then $A^{\prime}+B^{\prime}+C^{\prime}=0$, while $A^{\prime}B^{\prime}C^{\prime}=(\frac{1}{\beta^{3}}\cdot(r^{3}+s^{3})=\frac{-t^{3}}{\beta^{3}}=\frac{-C}{\beta^{3}}$. Furthermore, $\ord_{\beta}(A^{\prime})=\ord_{\beta}(B^{\prime})=0$ while $\ord_{\beta}(C^{\prime})>0$. So $A^{\prime},B^{\prime},C^{\prime}$ is once more a counterexample, and $N(ABC)\ge N(C)=27\cdot N(A^{\prime}B^{\prime}C^{\prime})$, contradicting the minimality assumption.
\qed
\end{proof}

\begin{corollary*}[1 (Kummer)]
\label{corollary1.5.1}\hspace{2em}
If $x,y,z$ are non-zero elements of \newo, then $x^{3}+y^{3}+z^{3}\ne 0$.
\end{corollary*}

\begin{proof}
We may assume $x$, $y$ and $z$ are pairwise coprime. If they are all prime to $\beta$, then, as we've observed, each of $x^{3}$, $y^{3}$ and $z^{3}$ is 1 or $-1\bmod{9}$, so their sum isn't divisible by 9.  So we may assume $\beta$ divides $z$. We now let $A$, $B$ and $C$ be $x^{3}$, $y^{3}$ and $z^{3}$, and invoke Theorem \ref{theorem1.5}.
\qed
\end{proof}

As an immediate consequence we find

\begin{corollary*}[2]
\label{corollary1.5.2}\hspace{2em}

\begin{enumerate}
\item \label{corollary1.5.2a} If $x$ and $y$ are elements of \newk\ with $x^{3}+y^{3}=1$ then either $x=0$, so $y\in\{1,u,v\}$ or $y=0$, so $x\in\{1,u,v\}$.
\item \label{corollary1.5.2b} (Claimed by Fermat, proved (more or less) by Euler.) The only points on the plane curve $x^{3}+y^{3}=1$ with rational $x$ and $y$ are $(1,0)$ and $(0,1)$.
\end{enumerate}
\end{corollary*}

\begin{remark*}
(\ref{corollary1.5.2b}) above appears as Exercise 3.17(ii) of \cite{2}. I think this exercise is meant as a joke.  Al says: ``Ken was a master of this, a consummate mathematician who loved to tell stories and make jokes\ldots . My experience is that this kind of playfullness draws people into the culture''.
\end{remark*}

I wouldn't expect a proof of FLT(3) from a new entrant into the mathematical world, but I hope that after reading the relevant sections of \cite{2}, the entrant would be prepared to look at this note and follow the proofs of Theorem \ref{theorem1.5} and the above corollaries.

We can also strengthen Theorem \ref{theorem1.5} slightly.

\begin{corollary*}[3]
\label{corollary1.5.3}\hspace{2em}
If $A$, $B$ and $C$ are elements of \newo\ summing to 0, with product a non-zero cube, then $A$, $B$ and $C$ are in some order the products of a non-zero element of \newo\ by 1, $u$ and $v$.
\end{corollary*}

\begin{proof}
Let $F$ and $G$ be the elements of \newk\ satisfying the linear equations $uF+vG=A$, $vF+uG=B$. Then $F+G=C$; multiplying these identities we see that $F^{3}+G^{3}=ABC$ which is $D^{3}$ for some non-zero $D$. So $(\frac{F}{D})^{3}+(\frac{G}{D})^{3}=1$. By Corollary 2(\ref{corollary1.5.2a}), either $F=0$ or $G=0$. In the first case, $A$, $B$ and $C$ are $vG$, $uG$ and $G$. In the second they are $uF$, $vF$ and $F$.
\qed
\end{proof}

\begin{theorem}
\label{theorem1.6}\hspace{2em}
A unit of \newo\ other than $1$ or $-1$ is not a sum of two cubes in \newk.
\end{theorem}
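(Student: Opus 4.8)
The plan is to deduce Theorem \ref{theorem1.6} from a single descent lemma in $\newo$: \emph{there is no triple $A,B,C$ of elements of $\newo$ with $A+B+C=0$ whose product $ABC$ is $u$ times a nonzero cube of $\newo$} (equivalently, by complex conjugation, $v$ times a nonzero cube). Granting this, the theorem is immediate. A unit other than $1$ and $-1$ is one of $u,-u,v,-v$; replacing $x,y$ by $-x,-y$ sends such a unit to its negative, and complex conjugation --- which stabilizes $\newk$ and swaps $u$ and $v$ --- lets us pass between $u$ and $v$, so we may assume the unit is $u$. If $u=x^3+y^3$ with $x,y\in\newk$ then $x\ne0$ and $y\ne0$, because $u$ is not a cube in $\newk$, a fact already used in the proof of Theorem \ref{theorem1.2}. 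Clearing denominators gives nonzero $a,b,d$ in $\newo$ with $a^3+b^3=ud^3$, and then $(A,B,C)=(a^3,b^3,-ud^3)$ sums to $0$ with $ABC=u\,(-abd)^3$, contradicting the lemma.

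To prove the lemma I would run a minimal-norm $3$-descent, exactly in the style of the proofs of Theorems \ref{theorem1.2} and \ref{theorem1.5}. Take a counterexample with $N(ABC)$ smallest; conjugating if necessary, write $ABC=uD^3$ with $D\ne0$. If an irreducible $g$ divides two of $A,B,C$ it divides the third (as they sum to $0$), whence $g^3\mid uD^3$ forces $g\mid D$, so dividing through by $g$ gives a smaller counterexample; thus $A,B,C$ are pairwise coprime. Unique factorization in $\newo$ then gives $A=ir^3$, $B=js^3$, $C=kt^3$ with $i,j,k$ units and $r,s,t$ nonzero. Comparing orders of primes in $(rst)^3=(u/ijk)D^3$ shows $rst/D$ is a unit, so $u/ijk$ is the cube of a unit and hence $\pm1$; after replacing $r,s,t$ by sign-changes we may take $i,j,k\in\{1,u,v\}$, which together with $ijk=\pm u$ forces $ijk=u$. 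Writing $1,u,v$ as $u^0,u^1,u^2$, the exponents of $i,j,k$ sum to $1$ modulo $3$ with each in $\{0,1,2\}$, so they cannot all be distinct (distinct exponents would sum to $0$ modulo $3$); relabelling, $i=j$, and then $k/i=u\,i^{-3}=u$, so dividing $ir^3+is^3+kt^3=0$ by $i$ yields $r^3+s^3=-ut^3$.

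The descent step is the ``secant'' substitution from the proof of Theorem \ref{theorem1.2}: set $A'=r+s$, $B'=ur+vs$, $C'=vr+us$. Then $A'+B'+C'=0$ (using $u+v=-1$) and $A'B'C'=r^3+s^3=-ut^3=u\,(-t)^3$, so $(A',B',C')$ is again a triple of the forbidden type, now with $N(A'B'C')=N(t)^3$. But $N(ABC)=N(r)^3N(s)^3N(t)^3$, so minimality forces $N(r)=N(s)=1$, i.e. $r$ and $s$ are units; hence $r^3+s^3\in\{-2,0,2\}$. It is not $0$, since $ut^3\ne0$, so $t^3=\mp2u^{-1}$ and $N(t)^3=4$ --- impossible, $4$ not being a perfect cube. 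That contradiction proves the lemma.

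Everything here mirrors arguments already in the paper, so I do not expect a deep obstacle; the one point demanding care is the bookkeeping with the unit coefficients $i,j,k$ --- checking that $ijk$ must equal exactly $u$ (a non-cube such as $-u$ being excluded), and that the exponent count then forces two of $i,j,k$ to coincide. It is this coincidence that makes the substitution applicable, and it is precisely where the hypothesis that $ABC$ is an order-$3$ unit times a cube, rather than a cube itself, is used. Beyond that, nothing is needed except unique factorization in $\newo$, the list of units, and the relations $u+v=-1$ and $uv=1$.
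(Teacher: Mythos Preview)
Your argument is correct and follows essentially the same route as the paper: pick a minimal-norm triple $A,B,C$ with $A+B+C=0$ and $ABC=u\cdot(\text{nonzero cube})$, reduce to pairwise coprimality, use the unit bookkeeping (with $ijk=u$ forcing two of $i,j,k$ to coincide) to reach $r^{3}+s^{3}+ut^{3}=0$, and then apply the secant substitution $(r+s,\,ur+vs,\,vr+us)$ to force $r,s$ to be units and reach a norm contradiction. The paper's endgame is phrased as ``$ABC=2\cdot(\text{a cube})$, contradicting our assumptions'' rather than your $N(t)^{3}=4$, but these are the same contradiction in different clothing.
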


\begin{proof}
We may assume the unit is $u$. If the result is false we produce in the usual way a triple $A,B,C$ in \newo\ with $A+B+C=0$ and $ABC=u\cdot(\mbox{a non-zero cube})$, and we select such a triple with $N(ABC)$ minimal. Then $A$, $B$ and $C$ are pairwise prime, and we find that $A=ir^{3}$, $B=js^{3}$, $C=kt^{3}$ with $i$, $j$ and $k$ in $\{1,u,v\}$. Since $ABC$ is the product of $u$ and a non-zero cube, $ijk=u$ and some two of $i$, $j$ and $k$ are equal. We may assume $i=j=1$. Then $k=u$, and $A=r^{3}$, $B=s^{3}$, $C=ut^{3}$.

Now define $A^{\prime}$, $B^{\prime}$ and $C^{\prime}$ as in the proof of Theorem \ref{theorem1.2}. They're another triple summing to 0, and their product is $r^{3}+s^{3}=-ut^{3}=-C$. Using the minimality property of the triple $A,B,C$ we find that $A$ and $B$ are units. Since $A$ and $B$ are cubes and $A+B\ne 0$, $A=B$ and so $ABC=2\cdot(\mbox{a cube})$ contradicting our assumptions.
\qed
\end{proof}

\begin{theorem}
\label{theorem1.7}\hspace{2em}
Let $M$ be an associate of $\beta$ or $\beta^{2}$ other than $\beta$ (or $-\beta$). Then $x^{3}+y^{3}=M$ has no solutions in \newk.
\end{theorem}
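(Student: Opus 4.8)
The plan is to run the $3$-descent of Theorem~\ref{theorem1.2} with the irreducible $m=\beta$ and then extract a contradiction from a congruence, just as in Theorems~\ref{theorem1.3} and~\ref{theorem1.4}. The twist is that $\newo/\beta$ is useless here (its elements $1$, $u$, $v$ coincide), so the congruence has to be taken modulo $9=\beta^{4}$ and carried along with the valuation $\ord_{\beta}$.

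Suppose $x^{3}+y^{3}=M$ with $x,y$ in \newk. First I would dispose of the degenerate case $x^{3}=y^{3}$: clearing denominators it would give $2P=MD^{3}$ for suitable nonzero $D,P$ in \newo\ with $P$ a cube, and since $\beta\nmid 2$ the left side has $\ord_{\beta}\equiv 0$ while the right side has $\ord_{\beta}\equiv\ord_{\beta}(M)\in\{1,2\}\bmod{3}$---impossible. So $x^{3}\ne y^{3}$, and Theorem~\ref{theorem1.2} (applicable since $\beta$ is irreducible and $M$ is an associate of $\beta$ or $\beta^{2}$) produces $r,s,t$ in \newo\ with $rst\ne 0$, with $r$ and $s$ prime to $\beta$, and with $ur^{3}+vs^{3}+Mt^{3}=0$.

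Now reduce mod $9$. Since $r$ and $s$ are prime to $\beta$, $r^{3}\equiv\pm 1$ and $s^{3}\equiv\pm 1\bmod{9}$ (noted above: any element of \newo\ prime to $\beta$ has cube $\equiv\pm1\bmod{9}$). Running through the four sign choices, $ur^{3}+vs^{3}$ is, mod $9$, one of $u+v=-1$, $u-v=\beta$, $-u+v=-\beta$, $-u-v=1$, so $Mt^{3}=-(ur^{3}+vs^{3})$ is, mod $9$, one of $1,-1,\beta,-\beta$. As $9=\beta^{4}$, this forces $\ord_{\beta}(Mt^{3})\le 1$. If $M$ is an associate of $\beta^{2}$ we are done, since then $\ord_{\beta}(Mt^{3})=2+3\,\ord_{\beta}(t)\ge 2$. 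If instead $M=\epsilon\beta$ with $\epsilon$ a unit $\ne\pm 1$, then $1+3\,\ord_{\beta}(t)\le 1$ forces $\ord_{\beta}(t)=0$ and $Mt^{3}\equiv\pm\beta\bmod{9}$; writing $Mt^{3}=\beta\cdot(\epsilon t^{3})$ and using $9=\beta^{4}$ this becomes $\epsilon t^{3}\equiv\pm 1\bmod{\beta^{3}}$, while $t$ prime to $\beta$ gives $t^{3}\equiv\pm 1\bmod{9}$, hence mod $\beta^{3}$. Combining, $\epsilon\equiv\pm 1\bmod{\beta^{3}}$, i.e.\ $\beta^{3}$ divides $\epsilon-1$ or $\epsilon+1$. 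But for each of the four units $\epsilon\in\{\pm u,\pm v\}$ a one-line norm computation shows $\epsilon-1$ and $\epsilon+1$ are nonzero of norm $1$ or $3$, whereas $N(\beta^{3})=27$; so neither is divisible by $\beta^{3}$, which is the contradiction we want.

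The only genuine point of friction is the one flagged at the start: the plain mod-$m$ argument that killed the cases $N(m)\equiv 2,4,5,7\bmod{9}$ collapses here, so one must work at level $9$ and track $\ord_{\beta}$; the identity $9=\beta^{4}$ is exactly what makes the ``associate of $\beta$'' and ``associate of $\beta^{2}$'' cases peel apart cleanly. Everything past the invocation of Theorem~\ref{theorem1.2} is routine.
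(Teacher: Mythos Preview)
Your proof is correct and follows essentially the same approach as the paper's: invoke Theorem~\ref{theorem1.2} with $m=\beta$ and derive a contradiction from congruences modulo $9$. The only cosmetic difference is packaging---the paper handles the $\beta^{2}$ case mod $3$ and then treats each of $\pm u\beta$, $\pm v\beta$ by explicitly listing the possible residues of $Mt^{3}$ mod $9$, whereas you organize both cases uniformly via $\ord_{\beta}$ and finish the $\epsilon\beta$ case with a norm computation; your explicit disposal of $x^{3}=y^{3}$ is a nice touch the paper leaves implicit.
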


\begin{proof}
If $x^{3}+y^{3}=M$, Theorem \ref{theorem1.2} provides an $r,s,t$ in \newo\ with $ur^{3}+vs^{3}+Mt^{3}=0$, and $r$ and $s$ prime to $\beta$. Suppose first that $M$ is an associate of $\beta^{2}=-3$. Then, mod $3$, $ur^{3}+vs^{3}$ is $\pm u\pm v$, and so cannot, as $Mt^{3}$ is, be a multiple of $3$, giving a contradiction. Suppose next that $M=u\beta=u(u-v)=v-1=u+2v$. Then, mod $9$, $ur^{3}+vs^{3}$ is $\pm u\pm v$, while $Mt^{3}$ is $(u+2v)$, $(-u-2v)$ or $0$, giving a contradiction. We argue similarly when $M$ is $-u\beta$, $v\beta$ or $-v\beta$.
\qed
\end{proof}

\begin{corollary*}[(Legendre)]
\label{corollary1.7.legendre}\hspace{2em}
$x^{3}+y^{3}=3$ has no solution in \newq.
\end{corollary*}

\begin{remark*}
In contrast, $x^{3}+y^{3}=9$ has infinitely many solutions in \newq.  (See Exercise 3.18 Take It Further, on pages 78--79 of \cite{2} for a sketch of the proof.)  Since $9=(\beta^{3})(\beta)$, this means that $\beta$ and $-\beta$ can be written in infinitely many ways as the sum of two cubes in \newk.
\end{remark*}

\section{More complicated descents}
\label{section2}

We've been proving non-existence of solutions (other than the trivial ones) to $x^{3}+y^{3}=M$, $x$ and $y$ in \newk, provided the irreducible factors of $M$ form a single class of associates. (Of course restrictions have to be made on $M$ for this to hold. For example, 17, which is irreducible in \newo, is $(\frac{18}{7})^{3}-(\frac{1}{7})^{3}$, and at the end of the note we'll see that although $m=5u+2v$, which is irreducible of norm 19, isn't such a sum, both $um$ and $vm$ are.)

We proceed to more complicated $M$. First we'll get a result when $M$ is $\beta p$ or $\beta p^{2}$ with $p$ a prime $\equiv 2$ or $5\bmod{9}$. This recovers a theorem of Sylvester---for such $p$ neither $9p$ nor $9p^{2}$ is a sum of two cubes in \newq.

Next consider primes $p\equiv 4$ or $7\bmod{9}$. Such $p$ factor in \newo\ as $\pi\bar{\pi}$ with $\pi\equiv 1\mod{3}$ and we take $M$ to be an associate of $p$ or $p^{2}$ not lying in \newz. We'll show that $M$ is not $x^{3}+y^{3}$ with $x$ and $y$ in \newk. (In the proof we'll impose a supplementary condition on $p$ involving $\pi$ and $\bar{\pi}$. But this condition is in fact \emph{always} satisfied, and for any given $p$ it's easily checked.) Of course we may assume $M$ is $pu$ or $p^{2}u$.

Finally suppose $p\equiv 1\mod{3}$. Then under certain circumstances we'll show that neither $3p$ nor $3p^{2}$ is $x^{3}+y^{3}$ with $x$ and $y$ in \newq. The $p< 100$ for which the circumstances don't apply are 61, 67 and 73. For each of these $p$ we'll show that $3p$ \emph{is} such a sum, but we don't know about $3p^{2}$, or what happens for many larger $p$.

\begin{theorem}
\label{theorem2.1}\hspace{2em}
Suppose $p$ in \newz\ is prime and $2$ or $5\bmod{9}$. (Note that $p$ is irreducible in \newo.)  Then neither $\beta p$ nor $\beta p^{2}$ is $x^{3}+y^{3}$ with $x$ and $y$ in \newk.
\end{theorem}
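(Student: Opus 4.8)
The plan is to imitate the infinite $3$-descent of Theorem~\ref{theorem1.2}, but to carry the extra information forced by the fact that $M$ now has the \emph{two} non-associate irreducible factors $\beta$ and $p$ (recall $p\equiv 2\bmod 3$ is irreducible in \newo, by Theorem~\ref{theoremA.5}). Suppose $x^{3}+y^{3}=M$ with $x,y\in\newk$ and $M=\beta p$ or $\beta p^{2}$. Clearing denominators in $x^{3}$, $y^{3}$, $-M$ (as in the proof of Theorem~\ref{theorem1.2}) produces a triple $A,B,C$ of distinct nonzero elements of \newo\ with $A+B+C=0$ and $ABC=M\cdot(\text{a nonzero cube})$; distinctness and nonvanishing hold because neither $N(M)$ nor $N(M)/4$ is a cube in \newq\ (here $N(M)=3p^{2}$ or $3p^{4}$ and $p\ne 3$). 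I would then choose such a triple with $N(ABC)$ minimal. Exactly as in Theorem~\ref{theorem1.2}, $A,B,C$ are pairwise coprime: dividing out a common irreducible $\pi$ would again give a valid triple, since $\pi^{3}\mid ABC$ forces $\pi$ to divide the cube even when $\pi$ is an associate of $\beta$ or of $p$ (one uses that $\beta$ and $p$ are coprime). As $p$ and $\beta$ each divide $ABC$ and are irreducible, each divides \emph{exactly} one of $A,B,C$, and the descent splits according to whether $\beta$ and $p$ divide the same element or two different ones.

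\emph{Case 1: $\beta$ and $p$ divide the same element}, say $C$. Unique factorization together with the sign normalization of Theorem~\ref{theorem1.2} gives $A=ir^{3}$, $B=js^{3}$, $C=k\beta p^{a}t^{3}$ with $a\in\{1,2\}$ the exponent of $p$ in $M$ and $i,j,k\in\{1,u,v\}$; since neither $u$ nor $v$ is a cube in \newk, $ijk=1$, and after dividing through by $k$ we have $k=1$, $ij=1$, and $r,s$ prime to $p$. Reducing mod $p$ gives $(r/s)^{3}\equiv -i$ in the field $\newo/p$. If $i\in\{u,v\}$, then $-i$ has multiplicative order $6$, so $r/s$ has order $18$, forcing $9\mid\lvert(\newo/p)^{*}\rvert=p^{2}-1$; but $p\equiv 2$ or $5\bmod 9$ gives $p^{2}-1\equiv 3$ or $6\bmod 9$ — the contradiction already exploited in Theorems~\ref{theorem1.3} and~\ref{theorem1.4}. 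If $i=1$, then $A=r^{3}$, $B=s^{3}$, and the triple $A'=ur+vs$, $B'=vr+us$, $C'=r+s$ is, just as in Theorem~\ref{theorem1.2}, another valid triple with $A'B'C'=r^{3}+s^{3}=-C$; minimality of $N(ABC)$ then forces $N(r)=N(s)=1$, so $A,B\in\{1,-1\}$, contradicting $A\ne B$ and $A+B\ne 0$.

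\emph{Case 2: $\beta$ and $p$ divide different elements.} Write the element divisible by $\beta$ as $Q=k_{Q}\beta\rho^{3}$, the one divisible by $p$ as $P=k_{P}p^{a}\pi^{3}$, and the third as $R=k_{R}\sigma^{3}$, with $k_{P},k_{Q},k_{R}\in\{1,u,v\}$, $k_{P}k_{Q}k_{R}=1$, and $\pi,\sigma$ prime to $\beta$. Now no useful factorization of $P+Q$, $Q+R$ or $P+R$ is at hand, so I would instead pass to congruences mod $9$, using (as in the discussion before Theorem~\ref{theorem1.5}) that an element $w$ of \newo\ prime to $\beta$ has $w^{3}\equiv\pm1\bmod 9$, the sign being the class of $w$ in $\newo/\beta\cong\newz/3$. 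Reducing $P+R\equiv 0\bmod\beta$ (valid since $\beta\mid Q$) relates the classes of $\pi$ and $\sigma$ mod $\beta$, hence the signs of $\pi^{3}$ and $\sigma^{3}$ mod $9$. If $\beta\nmid\rho$, then $P+Q+R\equiv 0\bmod 9$ — after multiplying through by a unit to normalize $k_{P}=1$, then by $k_{R}$, and using $\rho^{3}\equiv\pm1\bmod 9$ — becomes an explicit congruence between $\pm\beta$ and $p^{a}k_{R}-(k_{R})^{2}$ (up to sign), with $k_{R}\in\{1,u,v\}$; running over the finitely many possibilities and using $p\equiv 2,5$ (so $p^{2}\equiv 4,7$) mod $9$, one checks that the norm of the difference of the two sides is never divisible by $81$, so there is no solution. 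If instead $\beta\mid\rho$, then $\beta^{4}=9$ divides $Q$, so already $P+R\equiv 0\bmod 9$, which after normalization forces $p^{a}\equiv\pm k\bmod 9$ for some $k\in\{1,u,v\}$; again $N(p^{a}\mp k)$ is never divisible by $81$. Either way we reach a contradiction, completing the descent; the same argument handles $M=\beta p^{2}$, with $p^{2}\equiv 4$ or $7\bmod 9$ playing the role of $p\equiv 2$ or $5$.

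I expect Case~2 to be the main obstacle: it falls outside the template of Theorem~\ref{theorem1.2}, and one must set up the mod-$9$ reduction of $P,Q,R$ carefully, make the auxiliary split on whether $\beta$ divides the cube factor of $Q$, and then verify the resulting short list of congruences. It is worth noting that the hypothesis ``$p\equiv 2$ or $5\bmod 9$'' is used in two genuinely different ways — to forbid an order-$9$ element of $(\newo/p)^{*}$ in Case~1, and to force the mod-$9$ obstruction in Case~2 — which is presumably why, in contrast to the cases $p\equiv 4,7\bmod 9$ and $p\equiv 1\bmod 3$ flagged in the introduction, no supplementary condition on $p$ is needed here.
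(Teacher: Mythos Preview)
Your proof is correct and follows essentially the same route as the paper's: your Case~1 is the paper's case~(b) (both $\beta$ and $p$ dividing $C$), disposed of by the mod-$p$ argument forcing $i=j=1$ and then the $A',B',C'$ descent, and your Case~2 is the paper's case~(a), eliminated by a mod-$9$ check. The only differences are cosmetic---the paper normalizes the unit on the $\beta$-term (so that $-C\equiv 0,\pm\beta\pmod 9$) rather than on the $p$-term, and thereby treats your $\beta\mid\rho$ and $\beta\nmid\rho$ subcases at once; your mod-$9$ case analysis is sketchier (in particular the two $\pm$ signs on $p^{a}k_{R}$ and $k_{R}^{2}$ are independent, not a single ``up to sign''), but the finite check goes through just as you say.
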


\begin{proof}
We'll deal with $\beta p$; the case of $\beta p^{2}$ is essentially the same. If $x^{3}+y^{3}=\beta p$, we, in the usual obvious fashion, construct $A$, $B$ and $C$ in \newo\ with $A+B+C=0$, and $ABC$ the product of $\beta p$ by a non-zero cube. We choose a triple of this sort with $N(ABC)$ minimal, and find that $A$, $B$ and $C$ are pairwise prime.  $\beta$ divides $ABC$, so we may assume it divides $C$. Using unique factorization, the irreducibility of $\beta$ and $p$, and the simple arguments of Theorem \ref{theorem1.2} we find that there are $i$, $j$ and $k$ in $\{1,u,v\}$ and $r,s,t$ in \newo\ such that $ijk=1$ and one of the following holds:

\begin{enumerate}
\item \label{proof2.1a} \parbox{1in}{$A=ir^{3}$}\parbox{1in}{$B=jps^{3}$}\parbox{1in}{$C=k\beta t^{3}$}
\item \label{proof2.1b} \parbox{1in}{$A=ir^{3}$}\parbox{1in}{$B=js^{3}$}\parbox{1in}{$C=k\beta pt^{3}$}
\end{enumerate}

We're free to multiply $i$, $j$ and $k$ by a single element of $\{1,u,v\}$, ensuring that $k=1$ and $ij=1$. Note that $C\equiv 0$, $\beta$ or $-\beta\bmod{9}$ in case (\ref{proof2.1a}). We first dispose of case (\ref{proof2.1a}). If $i=j=1$, then $A+B\equiv \pm 1\pm 2$ or $\pm 1\pm 5\bmod{9}$ and so can't be congruent to $-C$. If $i=u$, $j=v$, then $A+B\equiv \pm u\pm 2v$ or $\pm u\pm 5v\bmod{9}$, while $\beta = u-v$ and we get a contradiction. Finally when $i=v$, $j=u$, then $A+B\equiv \pm v\pm 2u$ or $\pm v\pm 5u$, and so $A+B+C$ can't be $0\bmod{9}$, and we must be in case (\ref{proof2.1b}).

But now we switch to arguing mod ${p}$. Mod $p$, $0\equiv A+B=ir^{3}+js^{3}$, and it follows that $i/j$ is a cube in the multiplicative group of $\newo/p$, a group with $p^{2}-1$ elements. But $p^{2}-1\equiv 3$ or $6\bmod{9}$, so $(\newo/p)^{*}$ contains no element of order 9, $i/j$ cannot be $u$ or $v$, and so $i=j=k=1$ and $A$, $B$, $C$ are $r^{3}$, $s^{3}$ and $\beta pt^{3}$ We now finish things off as in the proof of Theorem \ref{theorem1.2}, arranging first that $p$ divides $r+s$, then defining $A^{\prime}$, $B^{\prime}$ and $C^{\prime}$ in the usual fashion, and finally using the minimality assumption to get a contradiction.
\qed
\end{proof}

\begin{remark*}
The above result differs in kind from Theorem \ref{theorem1.3}, where it was shown that no associate of $p$ or $p^{2}$ is a sum of two cubes with the single exceptions $2=1+1$, $-2=(-1)+(-1)$. For although $18=(2\beta)\beta^{3}$ is not a sum of two cubes in \newk, by Theorem \ref{theorem2.1}, the same doesn't hold for its associate $18u$. Note that $(1-2v)^{3}=1-6v+12u-8=19u+v$, and so $18u=(19u+v)+1=(1-2v)^{3}+1^{3}$; the classical ``tangent line'' and ``secant line'' constructions, generalized from \newq\ and the reals to \newk\ and the complexes, show that the equation $x^{3}+y^{3}=18u$ has infinitely many solutions in \newk.

Since, as we've noted, $9=\beta^{4}$ is the product of $\beta$ and a cube, Theorem \ref{theorem2.1} gives us:
\end{remark*}

\begin{corollary*}[]
\label{corollary2.1}\hspace{2em}
If $p$ is as in Theorem \ref{theorem2.1}, neither $9p$ nor $9p^{2}$ is a sum of two cubes in \newk. In particular neither $9p$ nor $9p^{2}$ is $x^{3}+y^{3}$ with $x,y$ in \newq---this result is due to Sylvester.
\end{corollary*}

We'll next be concerned with associates of $p$ and $p^{2}$ where the prime $p$ is $1\bmod{3}$ and so can be written as $\pi\bar{\pi}$, where $\pi$ and $\bar{\pi}$ are not associates. Since every mod 3 congruence class in \newo\ containing elements prime to $\beta$ is represented by a unit, we are free to assume $\pi\equiv 1\mod{3}$. If we write $\pi$ as $au+bv$, this means that $a\equiv b\equiv -1\mod{3}$.

When $p$ is 4 or $7\bmod{9}$ the results for $p$ and $p^{2}$ are now known---it was conjectured by Sylvester that $x^{3}+y^{3}=p$ and $x^{3}+y^{3}=p^{2}$ each have infinitely many solutions with $x$ and $y$ in \newq. This is now established; see \cite{1} for a discussion. Elkies announced a proof but has not published full details. Dasgupta and Voight published details of a related argument, but had to impose a supplementary condition on $p$. For $p\equiv 4$ or $7\bmod{9}$ see Elkies' mind-boggling tables in the bibliography of \cite{1} for the solutions of $x^{3}+y^{3}=p$ and $x^{3}+y^{3}=p^{2}$, with $x$ and $y$ in \newq\ having the smallest possible common denominator.

The situation for associates $M$ of $p$ or $p^{2}$ that do not lie in \newz\ is quite different. We may of course assume that $M$ is $up$ or $up^{2}$, and we show that these $M$ (when $p$ is $4$ or $7\bmod{9}$) cannot be written as $x^{3}+y^{3}$ with $x$ and $y$ in \newk. There's a restriction on $p$ (though it turns out to be only apparent) that we'll first discuss.

\begin{definition}
\label{def2.1} 
For $p\equiv 1\mod{3}$ write $p$ as $\pi\bar{\pi}$ with $\pi\equiv 1\mod{3}$. $p$ satisfies \textup{(I)} if $\bar{\pi}$ is a cube in the multiplicative group of the field $\newo/\pi$.
\end{definition}

Write $\pi$ as $au+bv$, with $a\equiv b\equiv -1\mod{3}$. Note that \textup{(I)} is satisfied precisely when the integer $\bar{\pi}+\pi$ is a cube in $(\newo/\pi)^{*}$. But $\bar{\pi}+\pi=-a-b$, and to establish \textup{(I)} for a given $p$ it's enough to show that $a+b$ is a cube in $(\newz/p)^{*}$. Here's a table for $p\le 73$.

\hspace{.5in}
\begin{tabular}{ll}
$p=7$ & $a+b = 2+(-1) = 1$\\
$p=13$ & $a+b = -4+(-1) = -5$\\
$p=19$ & $a+b = 5+2 = 7$\\
$p=31$ & $a+b = 5+(-1) = 4$\\
$p=37$ & $a+b = -7+(-4) = -11$\\
$p=43$ & $a+b = -7+(-1) = -8$\\
$p=61$ & $a+b = -4+5 = 1$\\
$p=67$ & $a+b = -7+2 = -5$\\
$p=73$ & $a+b = -1+8 = 7$\\
\end{tabular}

\begin{observation*}[]
\label{observation2.1} 
For all $p\equiv 1\mod{3},\ p\le 73$, \textup{(I)} is satisfied.
\end{observation*}

This is obvious when $p$ is 7, 43, or 61. Now consider the other cases:

\hspace{.5in}
\begin{tabular}{ll}
$p=13$ & $-5\equiv 8$\\
$p=19$ & $7\equiv 64$\\
$p=31$ & $4\equiv -27$\\
$p=37$ & since $4^{3}\equiv -10,\ (16)^{3}\equiv 100 \equiv -11$\\
$p=67$ & In $(\newz/p)^{*},\ -5 = \frac{-40}{8} = \frac{27}{8}$\\
$p=73$ & In $(\newz/p)^{*},\ 5^{3}\equiv -21,\ 6^{3}\equiv -3,$ and so $7 = \frac{-21}{-3} \equiv (\frac{5}{6})^{3}$\\
\end{tabular}

\begin{lemma*}
Suppose $p\equiv 4$ or $7\bmod{9}$ satisfies \textup{(I)}. Suppose furthermore that $A$, $B$ and $C$ are pairwise prime elements of \newo\ with $A+B+C=0$ and $ABC$ the product of $up$ or $up^{2}$ by a non-zero cube. Then two of $A$, $B$ and $C$ must be the product of a unit and a cube.
\end{lemma*}

\begin{proof}
We'll treat the case of $up$---that of $up^{2}$ is essentially the same. Let $\pi$ and $\bar{\pi}$ be as above. If the lemma fails, unique factorization and the irreducibility of $\pi$ and $\bar{\pi}$ show that we may arrange that $A=i\pi r^{3}$, $B=j\bar{\pi}s^{3}$, $C=kt^{3}$ with $i$, $j$ and $k$ in $\{1,u,v\}$. Since \textup{(I)} holds for $\pi$, we find that mod $\pi$, $B+C\equiv j\cdot (\mbox{a cube})+k\cdot (\mbox{a cube})$. So $j/k$, which lies in $\{1,u,v\}$, represents a cube in the group $(\newo/\pi)^{*}$ of order $p-1$. If $j/k$ is $u$ or $v$, $(\newo/\pi)^{*}$ contains an element of order 9; since $p-1\equiv 3$ or $6\mod{9}$ this is impossible. So $j=k$. Similarly, arguing mod $\bar{\pi}$, we find that $i=k$. Then $ijk=k^{3}=1$, and so $ABC=(ijk)\cdot p\cdot (rst)^{3}$, contradicting the fact that $ABC$ is the product of $up$ and a cube.
\qed
\end{proof}

\begin{theorem}
\label{theorem2.2}\hspace{2em}
Let $p$ be as in the Lemma. If $p$ satisfies \textup{(I)}, then neither $up$ nor $up^{2}$ is $x^{3}+y^{3}$ with $x$ and $y$ in \newk. (So, by the observation, for all $p\le 73$ and $\equiv 4$ or $7\bmod{9}$, neither $up$ nor $up^{2}$ admits such a representation.)
\end{theorem}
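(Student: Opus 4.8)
The plan is to run the infinite $3$-descent from the proof of Theorem~\ref{theorem1.2} once more, now with the Lemma supplying the factorization information that was unavailable there. Fix $M$ to be $up$ or $up^{2}$, write $e\in\{1,2\}$ for the exponent of $p$ in $M$, and suppose toward a contradiction that $x^{3}+y^{3}=M$ with $x,y\in\newk$. Clearing denominators exactly as at the start of the proof of Theorem~\ref{theorem1.2}, I would produce a triple of distinct $A,B,C\in\newo$ with $A+B+C=0$ and $ABC$ equal to $M$ times a non-zero cube, and I would choose one with $N(ABC)$ as small as possible. The usual argument makes $A,B,C$ pairwise coprime.

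Next I would read off the structure of $A,B,C$. By the Lemma, two of them are a unit times a cube. Writing $p=\pi\bar\pi$ with $\pi,\bar\pi$ non-associate irreducibles, each of $\pi,\bar\pi$ divides exactly one of $A,B,C$; since $\pi$ occurs in $ABC$ to exponent $e\not\equiv 0\bmod 3$, whereas every irreducible occurs to a multiple of $3$ in a unit times a cube, neither unit-times-cube entry can be divisible by $\pi$ or $\bar\pi$. Hence $\pi$ and $\bar\pi$ both divide the remaining entry, which I name $C$, and $A,B$ are prime to $p$. Unique factorization then yields (after absorbing signs into $r,s,t$, as in Theorem~\ref{theorem1.2}) $A=ir^{3}$, $B=js^{3}$, $C=k\,p^{e}t^{3}$ with $r,s,t\in\newo$, $i,j,k\in\{1,u,v\}$, and $ijk=u$ because $u$ is not a cube in \newk. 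Now reduce modulo $\pi$: since $\pi\mid C$, $ir^{3}+js^{3}\equiv 0\bmod\pi$, and $r,s$ are invertible in the field $\newo/\pi$ (Theorem~\ref{theoremA.5}), so $i/j\equiv(-s/r)^{3}\bmod\pi$ is a cube in $(\newo/\pi)^{*}$. But $|(\newo/\pi)^{*}|=p-1$ is $3$ or $6\bmod 9$ (as $p\equiv 4$ or $7\bmod 9$), so this group has no element of order $9$; since $u$ and $v$ are $\not\equiv 1\bmod\pi$ (because $p\nmid N(u-1)=3$) they have order $3$ there and hence are not cubes, so $i/j\equiv 1\bmod\pi$, forcing $i=j$. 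Dividing the whole triple by the unit $i$—which leaves $N(ABC)$ unchanged, since $i^{3}=1$—I may assume $A=r^{3}$, $B=s^{3}$, $C=up^{e}t^{3}$.

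This is exactly the configuration that is killed inside the proof of Theorem~\ref{theorem1.2}. After adjusting $s$ by a factor $u$ or $v$ so that $\pi\mid r+s$, put $A'=ur+vs$, $B'=vr+us$, $C'=r+s$; then $A'+B'+C'=0$ and $A'B'C'=r^{3}+s^{3}=-C=M\cdot(-t)^{3}$ is $M$ times a non-zero cube, while $A',B',C'$ are distinct—every forbidden coincidence forces $A=B$, which is impossible because $x^{3}=y^{3}$ cannot hold when $M$ is $up$ or $up^{2}$ with $p$ a prime $\ge 7$. Since $N(A'B'C')=N(C)\le N(A)N(B)N(C)=N(ABC)$, minimality forces $A$ and $B$ to be units; being cubes, $A,B\in\{1,-1\}$, so $N(C)=N(-(A+B))\le 4$, contradicting $N(C)=p^{2e}N(t)^{3}\ge p^{2}\ge 49$. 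The case $M=up^{2}$ is identical, and the parenthetical claim for $p\le 73$ is immediate from the Observation.

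I expect the only genuinely delicate point to be the step forcing $i=j$. The Lemma pins down which two of $A,B,C$ are units times cubes, but not that those two are cubes up to a single unit one can clear; in the ``twisted'' alternatives one is led instead to an equation of the shape $ur^{3}+vs^{3}+Mt^{3}=0$—the very conclusion of Theorem~\ref{theorem1.2}—rather than to a contradiction, and it is precisely the congruence mod $\pi$, which uses $9\nmid p-1$ just as in Theorems~\ref{theorem1.3} and~\ref{theorem1.4}, that rules these out. The rest—the exponent bookkeeping behind ``$\pi,\bar\pi\mid C$'', the distinctness of $A',B',C'$, and the impossibility of $x^{3}=y^{3}$—is routine.
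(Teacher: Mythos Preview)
Your argument is correct and follows the same route as the paper's proof: invoke the Lemma to force both $\pi$ and $\bar\pi$ into a single entry $C$, use the mod-$\pi$ congruence together with $9\nmid p-1$ to force $i=j$, normalize to $A=r^{3}$, $B=s^{3}$, $C=up^{e}t^{3}$, and then run the $A',B',C'$ descent of Theorem~\ref{theorem1.2} against minimality. Your write-up simply fills in a few details the paper leaves implicit (why $u,v$ are non-cubes in $(\newo/\pi)^{*}$, the explicit norm bound $N(C)\ge p^{2}$ at the end), and packages the units as $ijk=u$ with $C=kp^{e}t^{3}$ rather than the paper's equivalent $ijk=1$ with $C=k(up)t^{3}$.
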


\begin{proof}
We'll deal with $up$; the argument for $up^{2}$ is the same. Let $\pi$ and $\bar{\pi}$ be as above. If the result fails, and $up=x^{3}+y^{3}$, we construct in the usual obvious fashion $A$, $B$ and $C$ in \newo\ with $A+B+C=0$, and $ABC$ the product of $up$ by a non-zero cube, and choose such a triple with $N(ABC)$ minimal. Then $A$, $B$ and $C$ are pairwise prime. Using \textup{(I)} and the previous lemma we find that $p$ must divide one of $A$, $B$ and $C$, and we may assume $p$ divides $C$. 
Standard simple arguments show that $A$, $B$ and $C$ are $ir^{3}$, $js^{3}$ and $k(up)t^{3}$ with $i$, $j$ and $k$ in $\{1,u,v\}$ and $ijk = 1$. Then, in $\newo/\pi$, $ir^{3}+js^{3}\equiv 0$, and so $i/j$ represents a cube in the group $(\newo/\pi)^{*}$ of order $p-1$. Since $p-1\equiv 3$ or $6\mod{9}$, $i/j$ cannot be $u$ or $v$. Then $i=j=k$ and $A,B,C$ are $r^{3}$, $s^{3}$ and $(up)t^{3}$. As in the proof of Theorem \ref{theorem1.2} we arrange that $\pi$ divides $r+s$, and we define $A^{\prime}$, $B^{\prime}$ and $C^{\prime}$ as in the proof of that theorem. This gives a new counterexample and the minimality assumption gives the desired contradiction.
\qed
\end{proof}

\begin{remark*}
As I suggested earlier, \textup{(I)} indeed holds for all $p\equiv 1\mod{3}$. This is a beautiful result of Eisenstein and/or Gauss, connected with ``cubic reciprocity''. In any case, for any given $p$ it's easy to verify---since $(\newz/p)^{*}$ is a cyclic group of order $p-1$ one only needs show that $(a+b)^{\frac{p-1}{3}}$ is congruent to $1\bmod{p}$; you could extend my examples far past 73 using a computer. Cubic reciprocity has found a place in some texts accessible to undergraduates, notably that of Ireland and Rosen, but it requires more background than that of this note.
\end{remark*}

I'll turn now to a more complex problem that can be stated without any reference to \newo\ or \newk. Let $p$ be a prime in \newz, congruent to $1\bmod{3}$, and $M$ be $3p$ or $3p^{2}$. When is $M=x^{3}+y^{3}$ with $x$ and $y$ in \newq?

The story of my attack on this is good, and I'll give a polished version here---all the events I relate occurred, though not in the order I depict.

A glance at the \emph{Online Encyclopedia of Integer Sequences} showed there were no such representations of $3p$ for $p\le 37$. A more careful study of the $L$-functions and modular forms database showed there were no representations of $3p$ or $3p^{2}$ for $p<61$. (Care must be exercised---if you search under \newk\ rather than under \newq\ the ``curves'' you're looking for won't appear, even when $p=7$. And searching under \newq\ you'll need to know something about Weierstrass normal forms of elliptic curves.) After making the search I realized however that there were representations of $3p$ for $p=61$, 67, and 73. Suppose first that $p=61$, and consider the triple $A,B,C$ of 64, $-3$ and $-61$. These sum to 0 and $ABC$ is $3\cdot 61\cdot (\mbox{a cube})$. This guarantees that the representation exists. One way of seeing this is to choose $F$ and $G$ in \newk\ with $uF+vG=A$, $vF+uG=B$. We argue as in the proof of Theorem \ref{theorem1.1}. $F+G=C$, and multiplying the identities together we find that $F^{3}+G^{3}=ABC = 3(61)\cdot (\mbox{a cube})$. Of course $F$ and $G$ are in \newk, not \newq, but this is not hard to remedy.

Another more computer-friendly proof is to use an identity of Lucas. Now $A$ and $B$ are indeterminates and we take $x$ to be $A^{3}-B^{3}+6A^{2}B+3AB^{2}$ and $y$ to be $B^{3}-A^{3}+3A^{2}B+6AB^{2}$. Then $x+y=9AB(A+B)$ while $x^{2}-xy+y^{2}=3(A^{2}+AB+B^{2})^{3}$. If we set $C$ equal to $-A-B$ and multiply these two identities we find that $x^{3}+y^{3}=ABC\cdot (\mbox{a cube})$. In particular taking $A$ and $B$ to be 64 and $-3$ we find that $x=190171$, $y=-295579$, and $x^{3}+y^{3}=(-183)(46956)^{3}$. A similar approach works when $p=67$. Now $A$, $B$ and $C$ are 64, 3, and $-67$, and $ABC=(-4)^{3}\cdot 3p$. When $p=73$, take $A$, $B$, $C$ to be 81, $-8$ and $-73$, so that $ABC=6^{3}\cdot 3p$, and once again get a representation.

The sequence $7,13,19,31,37,43,61,67,73$, with the last three entries exceptional, aroused some memories in me and I was led to consider exceptional circumstances in which $3p$ or $3p^{2}$ might be a sum of two rational cubes. Two circumstances presented themselves.

\begin{definition}
\label{def2.2}
$p$ is Exceptional $A$ if it has an irreducible factor $\pi$ in \newo\ with $\pi\equiv \mbox{an integer}\bmod{9}$.
\end{definition}

Such a $\pi$ is evidently congruent to 1 or $-1\bmod{3}$. So if we write $\pi$ as $au+bv$, the condition amounts to saying that 9 divides $a-b$ for this particular $\pi$. In particular, looking at the tables following Definition \ref{def2.1} we find that 61, 67 and 73 are Exceptional $A$, while the $p<61$ are not. Also, when $p=79$ we take $\pi$ to be $-10u-7v$, while when $p=97$ we take $\pi$ to be $11u+8v$, concluding that 79 and 97 are not Exceptional $A$.

\begin{definition}
\label{def2.3}
$p$ is Exceptional $B$ if $3$ is a cube in $\newz/p$.
\end{definition}

Now $(\newz/p)^{*}$ is a cyclic group of order divisible by 3. It follows that $p$ is Exceptional $B$ precisely when $3^{\frac{p-1}{3}}\equiv 1\bmod{p}$. Observe that 61, 67 and 73 are Exceptional $B$.  For:

\begin{tabular}{ll}
When $p=61$, & $3^{5}\equiv -1$ and $3^{20}\equiv 1$\\
When $p=67$, & $3^{4}\equiv 14$, $3^{8}\equiv 196\equiv -5$, $3^{12}\equiv -70\equiv -3$  and so $3^{11}\equiv -1$\\
When $p=73$, & $3^{4}\equiv 8$, $3^{8}\equiv 64\equiv -9$, $3^{12}\equiv 1$\\
\end{tabular}

(Or just look at the cubes of $4$, $-4$, and $-6\bmod{61}$, $67$ and $73$.) One checks quickly that the other $p\equiv 1\mod{3}$ and $<100$ are not Exceptional $B$. For example when $p=79$, $3^{4}\equiv 2$, $3^{12}\equiv 8$, and $3^{13}\equiv 24$, and so is not 1 or $-1\bmod{79}$.

There are other ways of phrasing the Exceptional $A$ condition that make it easier to verify. Suppose $p$ is Exceptional $A$ and $\pi$ is an irreducible of norm $p$ congruent to an integer mod 9. Write $\pi$ as a \newz-linear combination of 1 and $u$. Then the coefficient of $u$ must be divisible by 9. So $\pi=x+(9y)(u)$, and $p=N(\pi)=x^{2}-9xy+81y^{2}$. In other words $p$ is Exceptional $A$ precisely when it is represented by the quadratic form $x^{2}-9xy+81y^{2}$. Since $4x^{2}-36xy+324y^{2}=(2x-9y)^{2}+243y^{2}$, one finds that $p$ is Exceptional $A$ precisely when $4p$ can be written as $x^{2}+243y^{2}$ for some $x$ and $y$. Also since $244=1^{2}+243$, $268=5^{2}+243$, $292=7^{2}+243$, $412=13^{2}+243$, $604=19^{2}+243$, $772=23^{2}+243$, we find that 61, 67, 73, 103, 151 and 193 are Exceptional $A$, while the other $p\equiv 1\mod{3}$ and $<200$ are not.

Before going on to the statement and proof of Theorem \ref{theorem2.3}, I'll show that the primes 79 and 97, neither of which is Exceptional $A$ or Exceptional $B$ do abide by the Eisenstein and/or Gauss result, and satisfy \textup{(I)} of Definition \ref{def2.1}.  When $p=79$, $\pi=-10u-7v$ and we need to show that 17 is a cube in $\newz/79$. When $p=97$, $\pi=11u+8v$ and we need to show that 19 is a cube in $\newz/97$. Now when $p=73$, $17^{2}\equiv -3$, $17^{8}\equiv 81\equiv 8$ and $17^{24}\equiv 512 \equiv 1$. And when $p=97$, $19^{2}\equiv -27$, $19^{4}\equiv 729\equiv 50 \equiv \frac{3}{2}$, and so $19^{16}\equiv \frac{81}{16}\equiv -1$, $19^{32}\equiv 1$, and in both cases \textup{(I)} is satisfied.

\begin{theorem}
\label{theorem2.3}\hspace{2em}
Let $p$ be a prime $\equiv 1\mod{3}$ which satisfies \textup{(I)} of Definition \ref{def2.1}. Suppose furthermore that $p$ is neither Exceptional $A$ nor Exceptional $B$. Then neither $3p$ nor $3p^{2}$ is a sum of two cubes in \newk. (In view of the preceding paragraph the conclusion holds for all $p<100$ other than $61$, $67$ and $73$.)
\end{theorem}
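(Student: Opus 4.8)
The plan is to run the infinite $3$-descent of Theorems \ref{theorem1.2}, \ref{theorem2.1} and \ref{theorem2.2} once more, now with $M$ equal to $3p$ or $3p^{2}$. Suppose $x^{3}+y^{3}=M$ with $x,y$ in \newk; as with $M=2$, here $x^{3}\ne y^{3}$ is automatic (otherwise $M/2$ would lie in \newk, impossible since $M$ is odd). So, exactly as in Theorem \ref{theorem1.2}, multiplying $x^{3}$, $y^{3}$ and $-M$ by a common denominator produces a triple $A,B,C$ of distinct elements of \newo\ with $A+B+C=0$ and $ABC$ equal to $M$ times a non-zero cube; I fix such a triple with $N(ABC)$ minimal, so that (as always) $A$, $B$, $C$ are pairwise coprime. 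Writing $3=(-1)\beta^{2}$ and $p=\pi\bar{\pi}$ with $\pi\equiv 1\bmod 3$ normalised as in Definition \ref{def2.1}, every irreducible factor of $M$ is an associate of $\beta$, $\pi$ or $\bar{\pi}$, so each of these three divides $ABC$ and hence exactly one of $A,B,C$. The argument splits according to how they are distributed.

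If $\beta$, $\pi$ and $\bar{\pi}$ all divide one member, say $C$, I finish by a descent as in Theorem \ref{theorem1.2}. Every prime dividing $A$ or $B$ occurs there to an exponent divisible by $3$, so $A=ir^{3}$, $B=js^{3}$ with $i,j\in\{1,u,v\}$, while $C$ is a unit times $-3p$ (resp.\ $-3p^{2}$) times a cube; the product condition gives $ijk=1$, and $ir^{3}+js^{3}\equiv0\bmod 3$ forces $i=j$, hence $i=j=k$, which I scale away. Now $A=r^{3}$, $B=s^{3}$, $C=-3p\,t^{3}$ (resp.\ $-3p^{2}\,t^{3}$) with $r,s$ prime to $3p$. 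Then $r+s$, $r+us$ and $r+vs$ are congruent mod $\beta$ with product divisible by $\beta^{2}$, so all three are divisible by $\beta$ and $\beta\mid t$; and $A'=ur+vs$, $B'=vr+us$, $C'=r+s$ is a second triple of distinct elements summing to $0$ with $A'B'C'=r^{3}+s^{3}=-C=M\cdot(\text{cube})$. Minimality of $N(ABC)$ then forces $N(r)=N(s)=1$, so $r^{3}+s^{3}=0$ and $t=0$, a contradiction.

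In every other distribution the object is to drive the units into an impossible arrangement. Unique factorisation writes $A,B,C$ as units from $\{1,u,v\}$ times fixed powers of $\beta,\pi,\bar{\pi}$ (the $\beta$-exponent $\equiv 2$, the $\pi$- and $\bar{\pi}$-exponents $\equiv 1$ or $2\bmod 3$ as $M$ is $3p$ or $3p^{2}$) times cubes, with $ijk=1$ for the three units. Reducing $A+B+C=0$ mod $\pi$ and mod $\bar{\pi}$ (fields of $p$ elements), and using that $\pi$ and $\bar{\pi}$ are cubes there (hypothesis \textup{(I)} together with complex conjugation) while $3$ is \emph{not} a cube there ($p$ not Exceptional $B$), shows that certain of the unit-ratios are non-cubes mod $\pi$ or $\bar{\pi}$: already a contradiction when $p\equiv 1\bmod 9$ (then $u,v$ are cubes there), and otherwise pinning those ratios to $\{u,v\}$. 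A congruence mod $3$ (a cube prime to $\beta$ is $\pm1\bmod 3$, and $\pi\equiv\bar{\pi}\equiv1\bmod 3$) forces some of those same units to be \emph{equal}, and the two conclusions collide.

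The hard part will be the bookkeeping over these many distributions -- in particular the sub-cases where the member divisible by $\beta$ has $\ord_{\beta}\ge 5$, so vanishes mod $9$. There the mod-$3$ congruence is replaced by one mod $9$: a cube prime to $\beta$ is $\pm1\bmod 9$, so (once the mod-$3$ step has identified the relevant units) one is reduced to asking whether $\beta^{4}$ divides $\pi\pm\bar{\pi}$. But $\pi+\bar{\pi}=-(a+b)$ has $\ord_{\beta}=0$ and $\pi-\bar{\pi}=(a-b)\beta$ has $\ord_{\beta}=1+2\ord_{3}(a-b)$, which is $3$ because $3\mid a-b$ (as $\pi=au+bv\equiv 1\bmod 3$) while $9\nmid a-b$ (as $p$ is \emph{not} Exceptional $A$); so $\beta^{4}$ divides neither and the sub-case dies. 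The configuration that genuinely tests all of this is the one realised by the triples $(4^{3},-3,-61)$, $(4^{3},3,-67)$ and $(3^{4},-8,-73)$ displayed before Definition \ref{def2.2} -- $\pi\bar{\pi}$ dividing one member and $\beta$ another -- which survives exactly when $p$ is both Exceptional $A$ and Exceptional $B$ (as $61,67,73$ are), and one has to check that away from those exceptions the constraints coming from mod $\pi$, mod $\bar{\pi}$, mod $3$ and mod $9$ really are incompatible.
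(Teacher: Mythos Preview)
Your overall plan is right and your treatment of the ``all three irreducibles divide one member'' case (the paper's case~(d)) is essentially the paper's descent, done correctly. The gap is that you never actually carry out the ``other distributions'': you announce that reductions mod $\pi$, $\bar\pi$, $3$ and $9$ will collide, and then say ``the hard part will be the bookkeeping'' --- but that bookkeeping \emph{is} the proof, and as written there is no verification that the constraints really are incompatible in each case. The paper does this cleanly by first observing (since $\beta^{2}\mid C$ forces $3\mid A+B$, and each of $A,B$ is $\pm i$ or $\pm j$ times a cube prime to $\beta$, hence $\equiv\pm i,\pm j\bmod 3$) that $i=j$, whence $i=j=k=1$; after that there are only four explicit configurations (a)--(d), each killed in a line or two. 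You should make that normalization up front --- it collapses your ``many distributions'' to four.

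Your fourth paragraph also conflates two separate arguments. The $\ord_{\beta}(\pi\pm\bar\pi)$ computation you sketch is exactly how the paper disposes of its case~(a) ($A=\pi r^{3}$, $B=\bar\pi s^{3}$, $C=3t^{3}$): one has $A+B\equiv\pm\pi\pm\bar\pi\bmod 9$, and since $\ord_{\beta}(\pi+\bar\pi)=0$ while $\ord_{\beta}(\pi-\bar\pi)=3$ (here ``not Exceptional~$A$'' enters), neither is $\equiv 2\bmod 3$, contradicting $\ord_{\beta}(C)\equiv 2\bmod 3$. This is not a sub-case depending on $\ord_{\beta}(C)\ge 5$; it handles case~(a) outright, for every value of $\ord_{\beta}(C)$. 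Conversely, your own mod-$\pi$ idea in the third paragraph, once $i=j=k=1$, already kills case~(a) too: $\bar\pi s^{3}+3t^{3}\equiv 0\bmod\pi$ together with~\textup{(I)} forces $3$ to be a cube in $\mathcal{O}/\pi$, contrary to ``not Exceptional~$B$''. So the two hypotheses (not Exceptional~$A$, not Exceptional~$B$) give alternative eliminations of case~(a); you need one of them, not a mixture indexed by $\ord_{\beta}$. (A small slip at the start: $M/2$ certainly lies in $\mathcal{K}$; what you mean is that $M/2$ would be a cube in $\mathcal{K}$, which fails since $2$ is an irreducible of $\mathcal{O}$ coprime to $3p$.)
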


\begin{proof}
We'll treat the case of $3p$, that of $3p^{2}$ being much the same. Suppose on the contrary we have a solution in \newk\ of $x^{3}+y^{3}=3p$. In the accustomed fashion we find $A$, $B$ and $C$ in \newo\ with $A+B+C=0$, and $ABC$ the product of $3p$ by a non-zero cube, and we choose such a triple with $N(ABC)$ minimal. Then $A$, $B$ and $C$ are pairwise prime. Write $p$ as $\pi\bar{\pi}$ with $\pi\equiv 1\mod{3}$, so that $3p$ is the product of $-\beta^{2}$, $\pi$ and $\bar{\pi}$.  Since $\beta$ divides $ABC$, we may assume $\beta$ divides $C$. Now factorize $A$, $B$ and $C$ into powers of products of irreducibles. A standard argument shows that there are $i,j,k$ in $\{1,u,v\}$ with $ijk=1$, and $r,s,t$ in \newo\ such that $A=i\cdot (\ )r^{3}$, $B=j\cdot (\ )s^{3}$, $C=k\cdot (\ )3t^{3}$, where each term in parentheses is either $1$, $\pi$, $\bar{\pi}$ or $p$, and the product of these three parenthesized terms is $p$. Then mod $3$, $A+B$ is $\pm i\pm j$. If $i\ne j$, then neither $i+j$ nor $i-j$ is divisible by $3$. So $i=j$. It follows that $i=j=k$ and that we can assume that they are all $1$. We're free to interchange $\pi$ and $\bar{\pi}$, and interchange $A$ and $B$, and so are presented with just four possibilities.

\begin{enumerate}
\item \label{proof2.3a} \parbox{1in}{$A=\pi r^{3}$}\parbox{1in}{$B=\bar{\pi} s^{3}$}\parbox{1in}{$C=3t^{3}$}
\item \label{proof2.3b} \parbox{1in}{$A=p r^{3}$}\parbox{1in}{$B=s^{3}$}\parbox{1in}{$C=3t^{3}$}
\item \label{proof2.c} \parbox{1in}{$A=\pi r^{3}$}\parbox{1in}{$B=s^{3}$}\parbox{1in}{$C=3\bar{\pi} t^{3}$}
\item \label{proof2.3d} \parbox{1in}{$A=r^{3}$}\parbox{1in}{$B=s^{3}$}\parbox{1in}{$C=3pt^{3}$}
\end{enumerate}

We'll use our assumption to eliminate (\ref{proof2.3a}), (\ref{proof2.3b}) and (\ref{proof2.c}). If (\ref{proof2.3a}) holds then mod~$3$, $A+B$ is $\pm\pi\pm\bar{\pi}$. For $D$ in \newo, $D\ne 0$, let $\ord_{\beta}(D)$ be the exponent to which $\beta$ appears in the factorization of $D$. Since $\pi\equiv\bar{\pi}\equiv 1\bmod{3}$, $\ord_{\beta}(\pi +\bar{\pi})=0$. If we write $\pi$ as $au+bv$, then $\pi -\bar{\pi}=\beta(a-b)$. Since $p$ is not Exceptional $A$, $a-b$ is $3$ or $6\bmod{9}$, and $\ord_{\beta}(\pi -\bar{\pi})=\ord_{\beta}(\beta)+\ord_{\beta}(a-b)=3$, but since $C=3t^{3}$, $\ord_{\beta}(A+B)$ is $\equiv 2\bmod{3}$, and cannot be $0$ or $3$, giving a contradiction.

Next suppose we're in situation (\ref{proof2.3b}). Then $s^{3}+3t^{3}\equiv 0\mod{\pi}$. So $3$ is a cube in $\newo/\pi$, and therefore a cube in $\newz/p$, a possibility foreclosed since $p$ is not Exceptional $B$. In situation (\ref{proof2.c}), $s^{3}+3\bar{\pi}t^{3}\equiv 0\mod{\pi}$. Since $p$ satisfies \textup{(I)}, $\bar{\pi}$ is a cube in $\newo/\pi$, and it follows that $3$ is a cube in $\newo/\pi$, and that $p$ is Exceptional $B$, an excluded possibility.  So we're in situation (\ref{proof2.3d}). Now we proceed as in the proof of Theorem (\ref{theorem1.2}), first arranging that $p$ divides $r+s$, then defining a new counterexample $A^{\prime},B^{\prime},C^{\prime}$, and then using the minimality assumption on $N(ABC)$ to show that $A$ and $B$ are units, giving a contradiction, and completing the proof.
\qed
\end{proof}

\begin{remark*}
The reader will surely have guessed by now that $p$ is Exceptional $A$ if and only if it is Exceptional $B$. This is in fact a consequence of cubic reciprocity, and was proved by Eisenstein and/or Gauss, but the proof seems to be of the same level of difficulty as the proof that \textup{(I)} of Definition \ref{def2.1} holds for all $p\equiv 1\mod{3}$. But if we grant these cubic reciprocity results we have:
\end{remark*}

\begin{corollary*}[]
\label{corollary2.3.1}\hspace{2em}
Let $p$ be a prime $\equiv 1\mod{3}$ such that $4p$ cannot be written as $a^{2}+243b^{2}$ with $a$ and $b$ in \newz. Then neither $3p$ nor $3p^{2}$ is $x^{3}+y^{3}$ with $x$ and $y$ in \newq.
\end{corollary*}

(This is a less well-known result of Sylvester and Pépin---for a while I imagined I was the first to discover it.) I've been informed on MathOverflow that the converse to the Corollary fails. Although 307 is Exceptional A and B, 921 is not a sum of two rational cubes. The proof is difficult, using a lot of algebraic number theory, and my informant relied on a computer program designed to treat just such questions.

In these notes we've given a complete proof that the conclusion of the corollary holds for all $p<100$ (apart from $61$, $67$ and $73$ which do not satisfy the restriction). For larger $p$ for which $4p\ne a^{2}+243b^{2}$ our proof is not complete. To complete it one needs to check that \textup{(I)} holds for the given $p$ and that $3$ is not a cube in $\newz/p$ for the given $p$, but this can be verified without much computation.

We present one more result that encapsulates what we know when $M$ is an associate of $\pi$ or $\pi^{2}$, and $\pi$ is irreducible of prime norm $p$, $p\equiv 1\mod{9}$.

\begin{theorem}
\label{theorem2.4}\hspace{2em}
Suppose $p\equiv 1\mod{9}$ is not Exceptional $A$. Write $p$ as $\pi\bar{\pi}$ with $\pi\equiv 1\mod{3}$. Then neither $\pi$ nor $\pi^{2}$ is a sum of two cubes in \newk.
\end{theorem}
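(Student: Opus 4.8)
The approach is the 3-descent template of the earlier theorems, but with the final contradiction read off modulo $9$ (equivalently modulo $\beta^{4}$, since $3=-\beta^{2}$) rather than modulo $\pi$. The reason for the change is that here $p\equiv 1\bmod 9$, so $(\newo/\pi)^{*}$ has order $p-1\equiv 0\bmod 9$ and the congruence-mod-$\pi$ arguments of Theorems \ref{theorem1.3}, \ref{theorem1.4} and \ref{theorem2.2} give nothing; by contrast, ``$p$ not Exceptional $A$'' is precisely a statement about $\pi\bmod 9$. So I would let $M$ be $\pi$ or $\pi^{2}$, assume $x^{3}+y^{3}=M$ with $x,y$ in \newk, and first dispose of $x^{3}=y^{3}$ (then $2x^{3}=M$, so taking norms $4N(x)^{3}=p$, impossible as $p\ge 19$). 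In the accustomed way I would then produce distinct non-zero pairwise coprime $A,B,C$ in \newo\ with $A+B+C=0$ and $ABC$ the product of $M$ by a non-zero cube, chosen with $N(ABC)$ minimal. Since $\pi$ is irreducible and divides $ABC$ I may assume $\pi\mid C$; unique factorization, after normalizing by a unit, then gives $i,j$ in $\{1,u,v\}$ with $ij=1$ and pairwise coprime non-zero $r,s,t$ with $A=ir^{3}$, $B=js^{3}$, $C=\pi t^{3}$ (or $C=\pi^{2}t^{3}$ when $M=\pi^{2}$), so that either $i=j=1$ or $\{i,j\}=\{u,v\}$.

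If $i=j=1$ we are in exactly the situation ruled out inside the proof of Theorem \ref{theorem1.2}: using $\pi\mid C=-(r+s)(r+us)(r+vs)$ one arranges (replacing $s$ by $us$ or $vs$) that $\pi\mid r+s$, sets $A'=ur+vs$, $B'=vr+us$, $C'=r+s$, observes $A'+B'+C'=0$ and $A'B'C'=r^{3}+s^{3}=-C$, and invokes the minimality of $N(ABC)$ to force $A$ and $B$ to be units; being cubes they then lie in $\{1,-1\}$, which is incompatible with $A\ne B$ together with $A+B=-C\ne 0$. I would just cite that argument.

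The substance of the proof is the other case, $ur^{3}+vs^{3}+Mt^{3}=0$ with $\{i,j\}=\{u,v\}$ --- say $i=u$, $j=v$ (the case $i=v$, $j=u$ being the same after interchanging $r$ with $s$ and $u$ with $v$). Since $r,s,t$ are pairwise coprime, at most one of them is divisible by $\beta$, and any element of \newo\ prime to $\beta$ has cube $\equiv\pm1\bmod 9$. If $\beta$ divides none of $r,s,t$, then $ur^{3}+vs^{3}\equiv\pm u\pm v\bmod 9$ is one of $1,-1,\beta,-\beta$, so $Mt^{3}\equiv\pm M$ must itself lie in $\{1,-1,\beta,-\beta\}\bmod 9$; but $M$ is prime to $\beta$, which rules out $\pm\beta$, while $M\equiv\pm1\bmod 9$ would force $\pi$ to be congruent to an integer mod $9$ --- immediately if $M=\pi$, and if $M=\pi^{2}$ after writing $\pi=1+3\gamma$ and checking that $\pi^{2}\equiv 1\bmod 9$ forces $3\mid\gamma$ (hence $\pi\equiv 1\bmod 9$) while $\pi^{2}\equiv -1\bmod 9$ is impossible --- i.e. $p$ would be Exceptional $A$, contrary to hypothesis. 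If instead $\beta$ divides exactly one of $r,s,t$, I would compare $\beta$-orders: $\beta\mid t$ makes $\beta^{3}\mid Mt^{3}$, yet $Mt^{3}\equiv\pm u\pm v\bmod 9$ has $\ord_{\beta}\le 1$; and $\beta\mid r$ forces $vs^{3}+Mt^{3}\equiv 0\bmod\beta^{3}$, hence (as $s,t$ are prime to $\beta$, and reducing mod $\beta^{2}=-3$) $M\equiv\pm v\bmod 3$, which is impossible since $M\equiv 1\bmod 3$ while neither $v-1$ (of norm $3$) nor $v+1=-u$ (of norm $1$) is divisible by $3$; the subcase $\beta\mid s$ is identical with $u$ in place of $v$.

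The only point I expect to be at all delicate is this last paragraph, the $\beta$-order bookkeeping in the subcases where $\beta$ divides one of $r,s,t$; it is also the only place the hypotheses ``$p$ not Exceptional $A$'' and ``$\pi\equiv 1\bmod 3$'' are genuinely used. Everything else is a routine replay of the descents already set up in Theorems \ref{theorem1.2} and \ref{theorem2.1}.
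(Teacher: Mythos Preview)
Your proof is correct and follows essentially the same route as the paper: reach the equation $ur^{3}+vs^{3}+Mt^{3}=0$ with $r,s,t$ pairwise coprime, then rule it out by congruence analysis modulo $3$ and modulo $9$, using that ``not Exceptional $A$'' forbids $\pi\equiv\pm1\bmod 9$. The only difference is packaging: the paper simply invokes Theorem \ref{theorem1.2} to land directly at $ur^{3}+vs^{3}+Mt^{3}=0$ (so the $i=j=1$ descent case never needs to be revisited), whereas you rebuild that step from scratch; and the paper leaves the $\pi^{2}$ case and the $\beta\mid t$ subcase implicit, whereas you spell them out.
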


\begin{proof}
I'll argue with $\pi$, the case of $\pi^{2}$ being much the same. If the theorem fails, Theorem \ref{theorem1.2} shows there are coprime non-zero $r$, $s$ and $t$ in \newo\ with $ur^{3}+vs^{3}+\pi t^{3}=0$.  Suppose first that $\beta$ divides $r$. Then, mod $3$, $ur^{3}\equiv 0$, $vs^{3}\equiv \pm v$ and $\pi t^{3}\equiv \pm 1$, giving a contradiction. So we may assume that $r$ (and similarly $s$) is prime to $\beta$. But then, mod $9$, $ur^{3}+vs^{3}\equiv 1$, $-1$, $\beta$ or $-\beta$. Since $p$ is not Exceptional $A$, $\pi t^{3}$ cannot be congruent to $1$ or $-1\bmod{9}$, and we get a contradiction.
\qed
\end{proof}

\begin{remark*}[1]
The same need not be true for $u\pi$ or $v\pi$. Suppose for example that $p=19$. Take $\pi$ to be $5u+2v$. Then $\pi\equiv 1\mod{3}$ and Theorem \ref{theorem2.4} tells us that $\pi$ is not a sum of two cubes. But $u\pi = -2u+3v = -8u+(6u+3v) = -8u+v\beta^{3}$, and Theorem \ref{theorem1.1} tells us that $u\pi$ is a sum of two cubes. Similarly $v\pi = -3u-5v = (-3u-6v)+v = u\beta^{3}+v$; it follows that $v\pi$, too, is a sum of two cubes. I don't know what happens in general, and would be grateful if someone with computer skills could look at the problem.
\end{remark*}

\begin{remark*}[2]
One can't dispense with the condition that $p$ is not Exceptional $A$. Suppose for example that $p=73$; take $\pi$ to be $1+9u$, which is $1\bmod{9}$. But now $\pi=8u+(1+u)=8u-v$, and Theorem \ref{theorem1.1} shows that $\pi$ is a sum of two cubes in \newk. The first five $p\equiv 1\mod{9}$ that are Exceptional $A$ are $73$, $271$, $307$, $523$ and $577$.  For each of them the corresponding $\pi\equiv 1\mod{3}$ are sums of two cubes---in some cases I can write down such representations of $u\pi$ and/or $v\pi$. But the general situation seems murky once again.
\end{remark*}

There have been developments in the sum of two cubes problems since 2020 when \cite{1} gave a summary of known results.

Kriz now has an article online at arXiv where he indicates a proof of the remaining part of Sylvester's conjecture---if $p$ is a prime $\equiv 8\mod{9}$ then there are infinitely many ways of writing $p$ and $p^{2}$ as sums of two rational cubes.

Also, it had been suggested that for large $N$, roughly half of the cube-free integers between $0$ and $N$ are sums of two rational cubes. Progress is being made towards proving results of this sort.  All this is far beyond my understanding, but I find it astonishing.



\end{document}